\let\Horig\H
\def\today{\ifcase\month\or
  January\or February\or March\or April\or May\or June\or
  July\or August\or September\or October\or November\or December\fi
  \space\number\day, \number\year}
\newtheorem{theorem}{Theorem}
\newtheorem{lemma}[theorem]{Lemma}
\newtheorem{corollary}[theorem]{Corollary}
\newtheorem{remark}{Remark}
\newcommand{\A}{\mathcal{A}}
\renewcommand{\H}{\mathcal{H}}
\renewcommand{\L}{\mathcal{L}}
\newcommand{\M}{\mathcal{M}}
\renewcommand{\P}{\mathcal{P}}
\newcommand{\Q}{\mathcal{Q}}
\newcommand{\Z}{\mathcal{Z}}
\newcommand{\n}{\mathbb{N}}
\renewcommand{\r}{\mathbb{R}}
\newcommand{\re}{{\rm Re}\,}
\newcommand{\ft}{\widehat}
\newcommand{\bo}{\boldsymbol}
\newcommand{\wt}{\widetilde}
\newcommand{\la}{\lambda}
\newcommand{\ga}{\gamma}
\newcommand{\ep}{\varepsilon}
\newcommand{\hh}{\tfrac12}
\renewcommand{\d}{\,\text{\rm d}}
\begin{document}


\title[]{Pair Correlation Estimates for the Zeros of the Zeta Function via Semidefinite Programming}
\author[Chirre, Gon\c{c}alves and de Laat]{Andr\'es Chirre, Felipe Gon\c{c}alves and David de Laat}
\date{\today}
\subjclass[2010]{}
\keywords{}
\address{IMPA - Instituto Nacional de Matem\'{a}tica Pura e Aplicada - Estrada Dona Castorina, 110, Rio de Janeiro, RJ, Brazil 22460-320}
\email{achirre@impa.br}
\address{Hausdorff Center for Mathematics, Universit\"at Bonn, Endenicher Allee 60, 53115 Bonn, Germany}
\email{goncalve@math.uni-bonn.de}
\address{Massachusetts Institute of Technology, Department of Mathematics, 77 Massachusetts Avenue, Simons building (Building 2), Room 2-241}
\email{mail@daviddelaat.nl}


\begin{abstract}
In this paper we study the distribution of the non-trivial zeros of the Riemann zeta-function $\zeta(s)$ (and other L-functions) using Montgomery's pair correlation approach. We use semidefinite programming to improve upon numerous asymptotic
bounds in the theory of $\zeta(s)$, including the proportion of
distinct zeros, counts of small gaps between zeros, and sums involving multiplicities of zeros.
\end{abstract}


\maketitle


\section{Introduction}

In this paper we give improved asymptotic bounds for several quantities related to the zeros of the Riemann zeta-function (and other functions) using Montgomery's pair correlation approach \cite{M}. The key idea is to replace the usual bandlimited auxiliary functions by the class of functions used in the linear programming bounds developed by Cohn and Elkies \cite{CE} for the sphere packing problem. The advantage of this framework is that it reduces the problems to convex optimization problems that can be solved numerically via semidefinite programming. For all problems we considered this produces better bounds than any bandlimited construction.

\subsection{Background} Let $\zeta(s)$ be the Riemann zeta-function. It is well known that all non-trivial zeros of $\zeta(s)$ are located in the critical strip $0<\re s<1$, and the Riemann Hypothesis (RH) is the statement that all these zeros are aligned on the line $\re s=1/2$. Let $N(T)$ count the number of zeros $\rho = \beta + i \gamma$ of $\zeta(s)$, repeated according the multiplicity, such that $0<\beta<1$ and $0 <\gamma \leq T$. The Riemann-von Mangoldt formula (in its weaker form) states that
\begin{align}\label{RvMformula}
N(T) =  (1+o(1)) \frac{T}{2\pi} \log T.
\end{align}
Let
\[
N^{*}(T):=\displaystyle\sum_{0<\gamma\leq T}m_{\rho},
\]
where the sum is over the non-trivial zeros of $\zeta(s)$ counting multiplicities\footnote{For every sum over zeros in this article the involved quantities should be repeated according to the multiplicity of the zero.} and $m_\rho$ is the multiplicity of $\rho$. In addition to RH, it is also conjectured that all zeros of $\zeta(s)$ are simple, therefore it is {\it conjectured} that
\begin{align} \label{NstarNconjecure}
N^{*}(T) \sim N(T),
\end{align}
as $T\to\infty$. To study the distribution of the zeros of the Riemann zeta-function, Montgomery defined the pair correlation function
\begin{equation}\label{paircorrelfunction}
N(x,T):=\sum_{\substack{0< \ga,\ga'\leq T \\ 0<\ga'-\ga\leq \frac{2\pi x}{\log T}}} 1
\end{equation}
for $x > 0$ and {\it conjectured} that
\[N(x,T) \sim N(T)\int_0^x \bigg(1-\frac{\sin^2(\pi y)}{(\pi y)^2}\bigg)\d y,
\]
as $T \to \infty$. Note that by \eqref{RvMformula} the average gap between zeros is $\frac{2\pi}{\log T}$, hence $N(x,T)$ is counting pairs of zeros not greater than $x$ times the average gap.

One line of research to understand and give evidence for the conjectures above is to produce bounds of the form
\begin{equation}\label{prob1}
N^*(T) \leq (c + o(1)) \, N(T),
\end{equation}
and
\begin{equation}\label{prob2}
N(x,T) \gg N(T),
\end{equation}
with $c > 1$ and $x>0$ as small as possible, as $T\to\infty$. 

These two problems have been widely studied with several improvements being made over the years. One approach is to use a formula relating sums and integrals involving an auxiliary function $f$ from a class $\A$, and then use this to derive an inequality involving the quantities we want to compare and the value of some functional $Q$ evaluated at $f$. Minimizing (or maximizing) the functional over the class $\A$ would then produce the best possible bound with that specific approach. Nowadays, this idea is a standard technique in analytic number theory (introduced first by Beurling and Selberg) and the following are some references (clearly not a complete list) where the main approach is exactly that: Large sieve inequalities \cite{GV, HV}; Erd\Horig{o}s-Tur\'{a}n inequalities \cite{CV2, Va}; Hilbert-type inequalities \cite{CL3, CLV, CV2, GMK, GV, Va}; Tauberian theorems \cite{GV}; Bounds in the theory of the Riemann zeta-function and L-functions \cite{CC,CChi,CChiM,CCLM,CCM,CCM2,CF,CS,Chi,Ga,GG}; Prime gaps \cite{CMS}.

For problem \eqref{prob1} Montgomery \cite{M} uses Fourier analysis to derive the inequality
\[
N^*(T) \leq \frac{1}{f(0)}\left( \widehat f(0) + \int_{-1}^1 \widehat f(x) |x| \d x + o(1) \right) N(T),
\]
for any non-negative function $f \in L^1(\r)$ with $\ft f$ supported in $[-1,1]$, where
\[
\widehat f(x) = \int_{-\infty}^{\infty} f(y) e^{-2\pi xy} \d y.
\]
Montgomery then gives a function $f$ that proves the bound \eqref{prob1} with $c \leq 4/3$. In \cite{MT} the optimal function in this class is found and as mentioned in \cite{CG} gives the bound $c \le 1.3275$. We relax the condition on the support of $\ft f$ to the requirement $\ft f(x) \le 0$ for $|x| \ge 1$, which matches exactly with the conditions required by the linear programming bounds for the sphere packing problem (see Section \ref{prelims} for a detailed explanation). This connection is what ultimately inspired us to attack the problem numerically and to find good test functions for the functionals derived in Section \ref{prelims}. From our point of view, our main contribution is the realization that methods from the sphere packing problem can applied in the theory of the Riemann zeta-function to improve several asymptotic bounds and, to the best of our knowledge, it is the first time it has been done.
\newpage

\section{Main Results}
We now state our main results.
\begin{theorem} \label{thmNstar}
	Assuming RH, we have
	\begin{align*}
	N^*(T)\leq ({1.3208}+o(1))N(T).
	\end{align*}
	Assuming the Generalized Riemann Hypothesis for Dirichlet L-functions (GRH), we have
	\begin{align*}
	N^*(T)\leq ({1.3155}+o(1))N(T).
	\end{align*}
\end{theorem}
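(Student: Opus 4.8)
The plan is to start from Montgomery's explicit formula and the associated functional inequality, but with the relaxed class of test functions described in the introduction. Concretely, the first step is to recall (or re-derive under RH) the inequality
\[
N^*(T) \leq \frac{1}{f(0)}\left( \widehat f(0) + \int_{-1}^1 \widehat f(x)\,|x|\d x + o(1)\right) N(T),
\]
where now $f$ is only required to be a suitable nonnegative integrable function with $\widehat f(x) \le 0$ for $|x|\ge 1$, rather than having $\widehat f$ compactly supported in $[-1,1]$. The reason this relaxation is legitimate is that the negative tail of $\widehat f$ outside $[-1,1]$ can only help: in Montgomery's argument the relevant sum over zero spacings picks up $\widehat f$ evaluated at points that, after rescaling, land outside $[-1,1]$, and there the sign condition makes those contributions nonpositive in the direction we need. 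I would spell this out carefully, since it is the conceptual heart of why the sphere-packing function class is the right one here. I expect this to be the main obstacle: controlling the contribution of the tail of $\widehat f$ on $|x|\ge 1$ in the explicit formula, and checking that the error terms are genuinely $o(1)$ uniformly, requires some care (and uses RH to guarantee the zeros lie on the critical line so that the pair-correlation sum has the right positivity).

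Next I would set up the optimization problem: minimize the functional
\[
Q(f) = \frac{1}{f(0)}\left( \widehat f(0) + \int_{-1}^1 \widehat f(x)\,|x|\d x\right)
\]
over the admissible class. After a normalization (say $f(0)=1$ or $\widehat f(0)=1$), this becomes a linear functional subject to the linear constraint $\widehat f(x)\le 0$ for $|x|\ge 1$ together with $f\ge 0$ (or whichever sign conditions the final functional actually requires — I would double-check which of $f\ge 0$ and $\widehat f \le 0$ outside a band is needed, mirroring the Cohn–Elkies setup). Following the sphere-packing methodology, I would restrict to functions of the form $f(x) = p(x)\,e^{-\pi x^2}$ (or a Gaussian times a polynomial in a similar Schwartz class) so that both $f$ and $\widehat f$ are explicit, the pointwise inequalities become polynomial inequalities, and the integrals $\int_{-1}^1 \widehat f(x)|x|\d x$ are computable in closed form. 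The constraint $\widehat f(x)\le 0$ on the unbounded set $|x|\ge 1$ is then reformulated, via a sum-of-squares / Positivstellensatz argument, as a semidefinite feasibility condition; the objective is linear in the polynomial coefficients, so the whole problem is an SDP.

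Finally I would run the SDP numerically at some polynomial degree, extract an approximately optimal $f$, and then — crucially for a rigorous bound — certify it: rationalize the coefficients, verify the sum-of-squares certificates exactly (so the sign constraints provably hold), and evaluate $Q(f)$ to obtain the rigorous numerical value $1.3208$. For the GRH statement, the only change is in the input explicit formula: the class of functions is the same, but the presence of the family of Dirichlet $L$-functions (and averaging over characters, à la the large-sieve / Montgomery–Vaughan input) loosens the support-type constraint slightly, effectively replacing the interval $[-1,1]$ in the constraint by a larger interval, which gives more room in the optimization and hence the smaller constant $1.3155$. I would carry out the same SDP-and-certify procedure there. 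The error-term bookkeeping to get the honest $o(1)$, and verifying the exact feasibility of the rationalized certificate, are the places where the argument could go wrong, so those are the steps I would write out in full detail.
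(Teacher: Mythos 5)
Your treatment of the RH bound is essentially the paper's route: Montgomery's explicit formula against $F(x,T)$, the relaxation from $\supp \widehat f\subseteq[-1,1]$ to $\widehat f\le 0$ outside $[-1,1]$ (justified by the nonnegativity of $F(x,T)$, which in fact holds unconditionally by the Mueller/Heath-Brown observation --- RH is needed for the \emph{asymptotics} of $F$, not its positivity), followed by minimizing the resulting linear functional over Gaussian-times-polynomial functions via sum-of-squares and semidefinite programming, with a post-hoc rigorous certification. One small caution: you have the roles of $f$ and $\widehat f$ partly swapped in your justification --- in the explicit formula it is the \emph{integral} $\int_{|x|>1}\widehat f(x)F(x,T)\d x$ whose tail is killed by $\widehat f\le 0$ and $F\ge 0$, while the \emph{sum over zero spacings} involves $f$ itself, whose nonnegativity is what lets you drop the off-diagonal terms and extract $f(0)N^*(T)$ from the diagonal. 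Both sign conditions are genuinely needed. (The paper certifies feasibility by interval arithmetic and an eigenvalue-perturbation argument rather than exact rationalization, but your alternative is equally legitimate.)

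There is, however, a genuine gap in your GRH half. The second bound in the theorem is still a statement about the zeros of $\zeta(s)$ itself, not about an average over Dirichlet $L$-functions, and the mechanism is \emph{not} an enlargement of the interval $[-1,1]$ in the sign constraint on $\widehat f$ (that enlargement to $|x|\le 2-\delta$ is what happens for the family average $N_{\Phi,s}(Q)$ in Theorem~\ref{thmsimplezerosofdir}, a different result). What GRH buys here is the Goldston--Gonek--\"Ozl\"uk--Snyder lower bound
\[
F(x,T)\ \ge\ \tfrac{3}{2}-|x|-o(1),\qquad 1\le |x|\le \tfrac32-\delta,
\]
for Montgomery's pair correlation function of $\zeta$. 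Since $\widehat f\le 0$ on $[1,\tfrac32-\delta]$, this replaces the crude estimate ``$\int_{1\le|x|\le 3/2-\delta}\widehat f(x)F(x,T)\d x\le 0$'' by the strictly better $\le 2\int_1^{3/2-\delta}\widehat f(x)(\tfrac32-x)\d x$, which changes the objective from $\Z$ to the functional $\wt\Z$ carrying the extra terms supported on $[r(f),\tfrac32 r(f)]$. The sign constraint on $\widehat f$ is unchanged; it is the objective that improves. As written, your plan would not produce the constant $1.3155$, and the optimization problem you would set up for the GRH case is the wrong one.
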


Montgomery \cite{M} was the first to show the constant $4/3 = 1.3333...$. Later Montgomery and Taylor \cite{MT} improved on this and found the bound  $1.3275$ as mentioned by Cheer and Goldston in \cite{CG}. Assuming the generalized Riemann Hypothesis GRH, Goldston, Gonek, \"{O}zl\"{u}k and Snyder \cite{GGOS} improved it to $1.3262$. 

Theorem~\ref{thmNstar} has an important application to estimating the quantity of simple zeros of $\zeta(s)$. Let
\[
N_s(T):=\displaystyle\sum_{\substack{0<\gamma\leq T\\ m_\rho=1}}1.
\]
Using the fact that
\begin{equation}\label{eq:firstineq}
{N_s(T)\geq \displaystyle\sum_{0<\ga\leq T}(2-m_\rho)}=2N(T)-N^{*}(T).
\end{equation}
we obtain the following corollary.

\begin{corollary} 
Assuming RH, we have
\[
{N_s(T)}\geq ({0.6792} + o(1)){N(T)}.
\]
Assuming GRH, we have
\[
{N_s(T)}\geq ({0.6845} + o(1)){N(T)}. 
\]
\end{corollary}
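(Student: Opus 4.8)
The plan is to deduce the corollary directly from Theorem~\ref{thmNstar} combined with the elementary inequality~\eqref{eq:firstineq}, so essentially no new analytic input is needed. First I would justify~\eqref{eq:firstineq}. Writing the sum over zeros with the multiplicity convention of the footnote, a zero $\rho$ of ordinate $0<\gamma\le T$ contributes the quantity $m_\rho(2-m_\rho)$ to $\sum_{0<\gamma\le T}(2-m_\rho)$, and this contribution equals $1$ when $m_\rho=1$ and is $\le 0$ when $m_\rho\ge 2$, whereas each simple zero contributes exactly $1$ to $N_s(T)$ and multiple zeros contribute $0$. Hence $N_s(T)\ge \sum_{0<\gamma\le T}(2-m_\rho)$, and expanding the right-hand side as $2N(T)-N^*(T)$ (using $N(T)=\sum_\rho m_\rho$ and $N^*(T)=\sum_\rho m_\rho^2$ over distinct zeros) gives~\eqref{eq:firstineq}.

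Next I would insert the two bounds of Theorem~\ref{thmNstar} into this inequality. Assuming RH, $N^*(T)\le(1.3208+o(1))N(T)$, so
\[
N_s(T)\ \ge\ 2N(T)-N^*(T)\ \ge\ (2-1.3208-o(1))\,N(T)\ =\ (0.6792+o(1))\,N(T),
\]
where the $-o(1)$ is absorbed into $o(1)$ since only a lower bound is asserted. Assuming GRH, the sharper bound $N^*(T)\le(1.3155+o(1))N(T)$ yields in exactly the same way $N_s(T)\ge(2-1.3155+o(1))N(T)=(0.6845+o(1))N(T)$. This gives both claimed estimates.

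I do not expect any genuine obstacle in this corollary: all of the difficulty is concentrated in Theorem~\ref{thmNstar}, whose proof proceeds through the semidefinite-programming optimization of the Montgomery-type functional set up in Section~\ref{prelims}, and which here we are free to assume. The only places that call for a little care are the bookkeeping of multiplicities in~\eqref{eq:firstineq} — one must check that the zeros with $m_\rho\ge 2$ really do not hurt the inequality — and the trivial arithmetic $2-1.3208=0.6792$ and $2-1.3155=0.6845$.
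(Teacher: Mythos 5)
Your proposal is correct and is exactly the paper's argument: the corollary follows by combining Theorem~\ref{thmNstar} with the inequality $N_s(T)\ge 2N(T)-N^*(T)$ from \eqref{eq:firstineq}, whose multiplicity bookkeeping you verify correctly under the paper's convention that sums over zeros are repeated with multiplicity. The arithmetic $2-1.3208=0.6792$ and $2-1.3155=0.6845$ and the absorption of the sign of the $o(1)$ term are all handled as in the paper.
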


Using the pair correlation approach, the previous best result known is due by Cheer and Goldston \cite{CG} showing that $67.27\%$ of the zeros are simple. Assuming GRH, Goldston, Gonek, \"{O}zl\"{u}k and Snyder \cite{GGOS} showed that $67.38\%$ are simple. In this way, we improved all these bounds. However, by a different technique, still assuming RH, Bui and Heath-Brown \cite{BHB} improved the result to $19/27=70.37...\%$, which currently is the best.

Combining the above result of Bui and Heath-Brown with Theorem \ref{thmNstar} and an argument of Ghosh, we can bound the proportion of distinct zeros of the Riemann zeta-function. Let
\[
N_d(T) := \sum_{0<\gamma\leq T} \frac{1}{m_\rho},
\]
be the number of distinct zeros of $\zeta(s)$ with $0<\ga \leq T$. Using the inequality
\begin{equation}\label{eq:secondineq}
2N_s(T)\leq \displaystyle\sum_{0<\gamma\leq T}\dfrac{(m_\rho-2)(m_\rho-3)}{m_\rho}=N^{*}(T)-5N(T)+6N_d(T).
\end{equation}
in conjunction with the estimate
\[
N_s(T) \geq \left(\frac{19}{27}+o(1)\right)N(T)
\]
and Theorem \ref{thmNstar}, we deduce the following corollary.

\begin{corollary} 
Assuming RH, we have
\[
{N_d(T)}\geq ({0.8477} + o(1)){N(T)}.
\]
Assuming GRH, we have
\[
{N_d(T)}\geq ({0.8486} + o(1)){N(T)}. 
\]
\end{corollary}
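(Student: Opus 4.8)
The plan is simply to combine the three ingredients already laid out before the statement: the algebraic identity in \eqref{eq:secondineq} (the argument of Ghosh referenced above), the Bui--Heath-Brown lower bound $N_s(T)\geq(19/27+o(1))N(T)$, and the upper bound for $N^*(T)$ furnished by Theorem~\ref{thmNstar}. The corollary will follow from an elementary rearrangement, so no genuinely new work is needed.

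First I would isolate $N_d(T)$ in \eqref{eq:secondineq}. For every positive integer $m_\rho$ one has $(m_\rho-2)(m_\rho-3)\geq 0$, and the contribution of a simple zero to $\sum_{0<\gamma\leq T}(m_\rho-2)(m_\rho-3)/m_\rho$ is exactly $2$; expanding $m^2-5m+6$ and dividing termwise yields the displayed identity
\[
\sum_{0<\gamma\leq T}\frac{(m_\rho-2)(m_\rho-3)}{m_\rho}=N^*(T)-5N(T)+6N_d(T),
\]
so that \eqref{eq:secondineq} gives $6N_d(T)\geq 2N_s(T)+5N(T)-N^*(T)$.

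Next I would substitute the two asymptotic estimates. Since a lower bound for $N_d(T)$ is obtained by inserting a lower bound for $N_s(T)$ and an upper bound for $N^*(T)$, both available inequalities push in the favorable direction, while $N(T)$ enters with its exact coefficient $5$. Under RH, using $N^*(T)\leq(1.3208+o(1))N(T)$ gives
\[
N_d(T)\geq\frac16\Bigl(\tfrac{38}{27}+5-1.3208+o(1)\Bigr)N(T)=(0.8477+o(1))N(T),
\]
and under GRH the same computation with the constant $1.3155$ in place of $1.3208$ produces $(0.8486+o(1))N(T)$; the various $o(1)$ terms simply aggregate, and the stated constants are the resulting values rounded down.

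The only points requiring attention are routine: verifying the polynomial identity, checking that each substituted inequality is used in the direction that does not strengthen the claim, and confirming that $\tfrac16(\tfrac{38}{27}+5-1.3208)=0.84776\ldots$ and $\tfrac16(\tfrac{38}{27}+5-1.3155)=0.84865\ldots$, so that truncation yields $0.8477$ and $0.8486$ respectively. There is no real obstacle here; the substance of the result lives entirely in Theorem~\ref{thmNstar} and the cited theorem of Bui and Heath-Brown.
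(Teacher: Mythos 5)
Your proposal is correct and is precisely the argument the paper intends: rearrange \eqref{eq:secondineq} to $6N_d(T)\geq 2N_s(T)+5N(T)-N^*(T)$, insert the Bui--Heath-Brown bound $N_s(T)\geq(\tfrac{19}{27}+o(1))N(T)$ and the bound on $N^*(T)$ from Theorem~\ref{thmNstar}, and compute $\tfrac16(\tfrac{38}{27}+5-1.3208)=0.8477\ldots$ and $\tfrac16(\tfrac{38}{27}+5-1.3155)=0.8486\ldots$. Nothing is missing.
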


Using the pair correlation approach, the best previous result known is due to Farmer, Gonek and Lee \cite{FGL} with constant $0.8051$. By a different technique, assuming RH, Bui and Heath-Brown \cite{BHB} improved the constant to $0.8466$.

We also obtain the best known results for the minimal nonzero value of Montgomery's pair correlation function.

\begin{theorem} \label{thmpaircorreltation}
Assuming RH and \eqref{NstarNconjecure}, we have
\[
N({0.6039},T)\gg N(T).
\]
Assuming GRH and \eqref{NstarNconjecure}, we have
\[
N(0.5769,T)\gg N(T).
\]
\end{theorem}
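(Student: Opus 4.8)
The plan is to follow Montgomery's pair correlation method, but to use the relaxed class of auxiliary functions described in Section \ref{prelims} (those with $\ft f(x)\le 0$ for $|x|\ge 1$, in analogy with the Cohn--Elkies linear programming bounds), rather than strictly bandlimited functions. The starting point is Montgomery's explicit formula, which under RH gives, for a suitable test function $r$ with $\ft r$ supported in $[-1,1]$,
\begin{align*}
\sum_{0<\ga,\ga'\le T} \ft r\!\left(\frac{(\ga-\ga')\log T}{2\pi}\right) w(\ga-\ga') = \left(\ft r(0) + \int_{-1}^{1} \ft r(x)\,|x|\,\d x + o(1)\right) N(T),
\end{align*}
where $w(u)=4/(4+u^2)$ is the usual smoothing weight. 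First I would record this formula in the form that is convenient for our class of functions, allowing $\ft r$ to be negative outside $[-1,1]$; since the contribution of the range $|x|\ge 1$ enters with the factor $|x|\ge 1$ against a term that is controlled, the relaxation is admissible in exactly the same way as in the proof of Theorem \ref{thmNstar}. The key point is that we need a function $r$ with $r\ge 0$ on $\r$ (so that the sum on the left is a nonnegative combination of terms weighted by $r$, which we can truncate to the diagonal-free range), and $\ft r(x)\le 0$ for $|x|\ge 1$.

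Next I would localize the left-hand sum to the pairs with $0<\ga'-\ga\le \tfrac{2\pi x}{\log T}$, i.e.\ to $N(x,T)$. Writing the left side as a sum over all pairs, the diagonal $\ga=\ga'$ contributes $r(0)\,N^*(T)$ (here the multiplicities enter, which is why we need the conjecture \eqref{NstarNconjecure} to replace $N^*(T)$ by $N(T)$), and by symmetry the off-diagonal contribution is twice the sum over $\ga<\ga'$. Splitting that into $0<\ga'-\ga\le \tfrac{2\pi x}{\log T}$ and $\ga'-\ga>\tfrac{2\pi x}{\log T}$, and using $r\ge 0$ together with an upper bound for $\ft r$ and the weight $w$ on the far range (estimated crudely via the second-moment bound coming from \eqref{RvMformula}), one obtains an inequality of the shape
\begin{align*}
\left(\ft r(0) + \int_{-1}^{1}\ft r(x)\,|x|\,\d x + o(1)\right) N(T) \;\le\; r(0)\,N(T) \;+\; 2\,\big(\sup_{|u|\le x}\text{(stuff)}\big)\, N(x,T) \;+\; o(N(T)),
\end{align*}
so that if the bracketed functional on the left exceeds $r(0)$ strictly, then $N(x,T)\gg N(T)$. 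Rescaling $r$ appropriately, the condition becomes: there exists an admissible $r$ (nonnegative, $\ft r\le 0$ outside $[-1,1]$) with
\begin{align*}
\ft r(0) + \int_{-1}^{1}\ft r(x)\,|x|\,\d x > r(0),
\end{align*}
where the relevant comparison constant depends on $x$ through the support/scaling of $r$; minimizing the threshold value of $x$ over all such $r$ is precisely a convex optimization problem of the same type treated for Theorem \ref{thmNstar}.

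The main obstacle — and the place where semidefinite programming does the real work — is producing an explicit admissible function $r$ that pushes the threshold $x$ down to $0.6039$ under RH (respectively $0.5769$ under GRH, where the formula gains the extra input from averaging over Dirichlet $L$-functions as in \cite{GGOS}). Concretely I would parametrize $r$ (or rather $\ft r$) in a finite-dimensional cone — e.g.\ $\ft r$ a polynomial times a fixed nonnegative bump on $[-1,1]$ plus a controlled tail, with the sign and positivity constraints encoded as sums-of-squares / moment conditions — solve the resulting SDP numerically, and then verify rigorously that the computed near-optimal $r$ satisfies all the constraints (nonnegativity of $r$ on $\r$ via a sum-of-squares certificate for its Fourier transform's structure, and $\ft r(x)\le 0$ for $|x|\ge 1$). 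The analytic part of the argument is essentially bookkeeping with Montgomery's formula; the quantitative gain over \cite{CG} and \cite{GGOS} comes entirely from enlarging the function class and optimizing over it.
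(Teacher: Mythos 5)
Your overall strategy (Montgomery's explicit formula, a Cohn--Elkies-type relaxation of the bandlimited class, and numerical optimization via sum-of-squares/SDP) is the same as the paper's, and your final criterion has roughly the right shape. But the admissibility conditions you impose are the ones needed for the $N^*(T)$ problem (Theorem~\ref{thmNstar}), not for the small-gaps problem, and with them the key inequality chain breaks. In the paper's proof (Lemma~\ref{thmsmallgapszerosofxi}) the function applied to the zero differences is $g(x)=f(r(f)x/\la)$ with $f\in\A_{LP}$: it is \emph{eventually non-positive} (so all off-diagonal pairs with gap larger than $\frac{2\pi\la}{\log T}$ contribute non-positively and are simply discarded when bounding the pair sum from above by $N^*(T)+2N(\la,T)$, using $\|f\|_\infty=f(0)=1$), and its Fourier transform is \emph{non-negative everywhere} (so, since $F(x,T)\geq0$, the explicit-formula side is bounded from \emph{below} by restricting to $|x|\leq1$ where \eqref{Fasymptotics} applies, plus $[1,\tfrac32-\delta]$ under GRH via \eqref{Fasymptoticmore}). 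You require the opposite: the function applied to the differences non-negative on $\r$ and its transform non-positive outside $[-1,1]$. With those signs the explicit formula yields an upper bound where a lower bound is needed, and --- more fatally --- the far off-diagonal terms are non-negative and cannot be discarded; your plan to ``estimate them crudely via the second-moment bound coming from \eqref{RvMformula}'' does not work, since for a fixed non-negative kernel those terms are generically of size $\gg N(T)$ (each unit window of normalized gaps is expected to contain $\asymp N(T)$ pairs), not an error term. Relatedly, your displayed explicit formula has $\ft r$ both applied to the differences and integrated against the pair-correlation measure, i.e.\ the same function on both sides of a Fourier-duality relation, so it cannot be correct as written; this sign/duality confusion propagates through the rest of the argument.

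A second, smaller omission: you never make precise how the threshold $0.6039$ is extracted. In the paper this is the functional $\P(f)=\inf\{\la>0:\,p_f(\la)>0\}$ with $p_f(\la)=-1+\frac{\la}{r(f)}+\frac{2r(f)}{\la}\int_0^{\la/r(f)}\ft f(x)x\d x$; the conclusion of Lemma~\ref{thmsmallgapszerosofxi} is $N(\la,T)/N(T)\geq p_f(\la)/2+o(1)$ after invoking \eqref{NstarNconjecure} to replace the diagonal contribution $N^*(T)$ by $(1+o(1))N(T)$, and one then minimizes $\la$ subject to the existence of $f$ with $p_f(\la)>0$ (a bilevel problem solved by a binary search over $\la$ inside the SDP). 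Your condition ``functional $>r(0)$'' is $p_f(\la)>0$ in disguise, but without fixing which of $r,\ft r$ carries which sign condition and where the rescaling by $\la$ enters, the optimization problem you would hand to the solver is not the right one.
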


Montgomery \cite{M} showed that $N(0.68..., T)\gg N(T)$, and in \cite{GGOS} it is pointed out that it is not difficult to modify Montgomery's argument to derive the sharper constant $0.6695$. This result was improved by Goldston, Gonek, \"{O}zl\"{u}k and Snyder \cite{GGOS} with constant $0.6072$. Later,  Carneiro, Chandee, Littmann and Milinovich \cite{CCLM} improved the constant to $0.6068...$. Assuming GRH and \eqref{NstarNconjecure}, Goldston, Gonek, \"{O}zl\"{u}k and Snyder showed the constant $0.5781...$. 

\subsection{Results for zeros of Dirichlet L-functions} To obtain averaged bounds for the percentage of simple zeros of primitive Dirichlet L-functions we use the framework established by Chandee,  Lee, Liu and Radziwi\l\l \ \cite{CLLR}. Let $\Phi$ be a real-valued smooth function supported in the interval $[a,b]$ with $0<a<b<\infty$. Define its Mellin transform by
\[
\mathcal{M}\Phi(s)=\int_{0}^{\infty}\Phi(x)x^{s-1}\d x.
\]
For a character $\chi$ mod $q$, let $L(s,\chi)$ be its associated Dirichlet L-function. Under GRH, all non-trivial zeros of $L(s,\chi)$ lie on the critical line $\re s = 1/2$.
Let 
$$
N_{\Phi}(Q):=\displaystyle\sum_{Q\leq q\leq 2Q}\dfrac{W(q/Q)}{\varphi(q)}\displaystyle\sum_{\substack{\chi  \ (\mathrm{mod} \ q) \\ \text{primitive}}} \displaystyle\;\sum_{\gamma_\chi}\big|\mathcal{M}\Phi(i\gamma_\chi)\big|^2,
$$
where $W$ is a non-negative smooth function supported in $(1,2)$, and where the last sum is over all non-trivial zeros $\hh+i\gamma_\chi$ of the Dirichlet L-function $L(s,\chi)$. In \cite[Lemma 2.1]{CLLR} it is shown that
$$ 
N_{\Phi}(Q)\sim\dfrac{A}{2\pi}Q\log Q \int_{-\infty}^{\infty}\big|\mathcal{M}\Phi(ix)\big|^2\d x,
$$ 
where 
$$
A=\M W(1)\displaystyle\prod_{p\ \text{prime}}\bigg(1-\dfrac{1}{p^2}-\dfrac{1}{p^3}\bigg).
$$
Let
$$
N_{\Phi,s}(Q)=\displaystyle\sum_{Q\leq q\leq 2Q}\dfrac{W(q/Q)}{\varphi(q)}\displaystyle\sum_{\substack{\chi  \ (\mathrm{mod} \ q) \\ \text{primitive}}} \displaystyle\;\sum_{\substack{\gamma_\chi \\ \text{simple}}}\big|\mathcal{M}\Phi(i\gamma_\chi)\big|^2.
$$
The quantity 
$$
\liminf_{Q\to\infty}\frac{N_{\Phi,s}(Q)}{N_{\Phi}(Q)}
$$
then measures (in average) the proportion of simple zeros among all primitive Dirichlet L-functions. 

In addition, for the following theorem, we require that $\Phi(x)$ and $\M \Phi(ix)$ are non-negative functions. We note that we can also further relax the conditions on $\Phi$ so to include the function given by $\M\Phi(ix) = (\sin x/x)^2$, as was established in \cite{CLLR} and \cite{S}. 

\begin{theorem} \label{thmsimplezerosofdir} 
Assuming GRH, we have
\[
N_{\Phi,s}(Q)\geq (0.9350+o(1))N_{\Phi}(Q). 
\]
\end{theorem}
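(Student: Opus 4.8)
The plan is to follow the same pair-correlation template that underlies Theorem~\ref{thmNstar}, but adapted to the averaged Dirichlet L-function setting of \cite{CLLR}. First I would recall from \cite[Lemma 2.1]{CLLR} (or the explicit formula proved there) the key identity that expresses a weighted double sum over zeros of primitive characters, tested against a function built from $\M\Phi$, in terms of an arithmetic main term plus a ``diagonal'' contribution. Concretely, for a suitable auxiliary function $f$ one gets an asymptotic of the shape
\[
\sum_{Q\le q\le 2Q}\frac{W(q/Q)}{\varphi(q)}\sum_{\chi\ \mathrm{primitive}}\sum_{\gamma_\chi,\gamma'_\chi} \widehat{f}\!\left(\frac{(\gamma_\chi-\gamma'_\chi)\log Q}{2\pi}\right)\big|\M\Phi(i\gamma_\chi)\big|\,\big|\M\Phi(i\gamma'_\chi)\big| \;=\; \big(C(f)+o(1)\big)\,N_\Phi(Q),
\]
where $C(f)$ is an explicit linear functional of $f$ and $\widehat f$ analogous to the one appearing in Montgomery's inequality, the difference being that here the relevant ``form factor'' is dictated by the GRH explicit formula for L-functions rather than for $\zeta$. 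Because $\M\Phi(ix)\ge 0$ and $\Phi\ge 0$, the off-diagonal terms contribute non-negatively when $\widehat f\ge 0$ on the support region, exactly as in the sphere-packing-type setup of Section~\ref{prelims}.

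Next I would isolate the simple zeros. On the diagonal $\gamma_\chi=\gamma'_\chi$ the double sum restricted to simple zeros is precisely $f(0)\,N_{\Phi,s}(Q)$ up to the $o(1)$, while the full diagonal gives $f(0)$ times a sum weighted by multiplicities, bounded below by $f(0)\,N_\Phi(Q)$ (each zero counted $m_\rho\ge 1$ times). Discarding the non-negative strictly-off-diagonal terms (legitimate once $\widehat f(x)\le 0$ for $|x|\ge 1$ and the non-negativity of $\M\Phi$ is used, mirroring the Cohn--Elkies conditions) and rearranging yields an inequality of the form
\[
N_{\Phi,s}(Q)\;\ge\;\Big(2-\frac{C(f)}{f(0)}+o(1)\Big)N_\Phi(Q)
\]
after the same $2-\text{(bound)}$ manipulation used to pass from Theorem~\ref{thmNstar} to its corollary (here via $N_{\Phi,s}\ge \sum(2-m_\rho)|\M\Phi|^2$). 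One then minimizes $C(f)/f(0)$ over the admissible class $\A$ — the same class of functions used for the sphere packing linear program — and the optimum is computed numerically by semidefinite programming, producing the constant $0.9350$ after plugging in the near-optimal $f$ and verifying feasibility rigorously.

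The main obstacle I anticipate is not the optimization itself (that is parallel to the $\zeta$-case and handled by the SDP machinery of Section~\ref{prelims}) but rather getting the averaged explicit formula into the clean functional form above with all error terms genuinely $o(N_\Phi(Q))$: one must control the arithmetic sum over moduli $q\in[Q,2Q]$ and over primitive $\chi\bmod q$, track how the weight $|\M\Phi(i\gamma_\chi)|^2$ interacts with the explicit-formula kernel, and confirm that the cross terms coming from prime powers and from the functional equation assemble exactly into the constant $A$ and the stated functional. This is where the hypotheses that $\Phi$ and $\M\Phi(ix)$ be non-negative (and the relaxation permitting $\M\Phi(ix)=(\sin x/x)^2$ from \cite{CLLR,S}) are essential, since they guarantee the sign of the discarded terms; verifying those sign conditions for the specific near-extremal $f$ output by the solver is the delicate bookkeeping step. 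Once that identity is in hand, the rest is a direct transcription of the argument already used for Theorems~\ref{thmNstar} and~\ref{thmpaircorreltation}.
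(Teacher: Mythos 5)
Your outline is essentially the paper's proof: the paper's Lemma~\ref{thmnphi} runs exactly this argument — the CLLR pair-correlation formula for $F_\Phi$, dropping the non-negative off-diagonal terms, the inequality $N_{\Phi,s}(Q)\geq 2N_\Phi(Q)-N^*_\Phi(Q)$, and then SDP minimization of the resulting functional $\L(f)$ over $\A_{LP}$. One correction, though: your parenthetical justifying the discard of the off-diagonal terms by ``$\widehat f(x)\le 0$ for $|x|\ge 1$'' has the two sign conditions of $\A_{LP}$ playing the wrong roles — taken literally together with $\widehat f\ge 0$ it would force $\supp \widehat f\subseteq[-1,1]$, i.e.\ the bandlimited class, which is precisely what the paper avoids. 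In the actual argument $\widehat f\ge 0$ (everywhere) is what makes the off-diagonal terms non-negative, while the eventual non-positivity of $f$ itself is used on the other side of the Fourier-inversion identity to truncate $\int g(x)F_\Phi(Q^x,W)\d x$ to the range $|x|\le 2-\delta$ where the CLLR asymptotic holds; that doubled range (compared with $|x|\le 1$ for $\zeta$) is the quantitative input behind the constant $0.9350$, and your ``identity'' for the double sum should accordingly be an upper bound rather than an asymptotic equality.
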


Using the pair correlation approach, Chandee, Lee, Liu and Radziwi\l\l \ \cite{CLLR} showed that $91.66\%$ of the zeros are simple. The best previous result known is due to Sono \cite{S}, showing that $93.22\%$ of the zeros are simple. On another hand, \"{O}zl\"{u}k \cite{O} obtained a similar lower bound but for all Dirichlet L-functions rather than just the primitive L-functions, showing that $91.66\%$ of the zeros are simple (in some sense).

\subsection{Results for zeros of $\xi'(s)$}
We can extend our analysis to the zeros of $\xi'(s)$, where
\begin{align*}
\xi(s)=\frac{1}{2}s(s-1)\pi^{-\frac{s}{2}}\Gamma\bigg(\frac{s}{2}\bigg)\zeta(s).
\end{align*}
It is known that $\xi'(s)$ has only zeros in the critical strip $0<\re s<1$ and that RH implies that all its zeros are also on the line $\re s=1/2$. Let $N_1(T)$ count the number of zeros $\rho_1=\beta_1+i\gamma_1$ of $\xi'(s)$ (with multiplicity) such that $0< \gamma_1 \leq T$. It is also known that
\begin{align*}
N_1(T) =  (1+o(1)) \frac{T}{2\pi} \log T.
\end{align*}
We can then similarly define the function 
$$
N_{1}^{*}(T):=\displaystyle\sum_{0<\gamma_1\leq T}m_{\rho_1},
$$
where $m_{\rho_1}$ is the multiplicity of the zero $\rho_1$, and derive the sharpest known upper bound for $N_1^*(T)$.
\begin{theorem}\label{thmn1star}
	Assuming RH, we have
	\begin{align*}
	N_1^*(T)\leq (1.1175+o(1))N_1(T).
	\end{align*}
\end{theorem}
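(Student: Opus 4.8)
The plan is to mirror the pair‑correlation argument that underlies Theorem~\ref{thmNstar}, but applied to the zeros of $\xi'(s)$ rather than of $\zeta(s)$. First I would record the explicit formula / pair‑correlation identity for $\xi'(s)$. Writing $\gamma_1$ for ordinates of zeros of $\xi'$, one has under RH a Montgomery‑type formula expressing, for a suitable test function $f$,
\begin{equation*}
\sum_{0<\gamma_1,\gamma_1'\le T} \widehat f\!\left(\frac{(\gamma_1-\gamma_1')\log T}{2\pi}\right) w(\gamma_1-\gamma_1')
\end{equation*}
in terms of $f(0)$, $\widehat f(0)$, and a singular term coming from the diagonal, exactly as in Montgomery's setup, the key point being that the off‑diagonal contribution is controlled by the Guinand–Weil explicit formula for $\xi'$. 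The only structural difference from $\zeta$ is that $\xi'(s)/\xi(s)$ has an extra simple pole structure, so the relevant ``form factor'' (the analogue of Montgomery's $1-(\sin\pi u/\pi u)^2$, or more precisely the weight appearing in the quadratic form) is altered; this is already worked out in the literature (Farmer, or Carneiro–Chandee–Littmann–Milinovich for $\xi'$), so I would cite the appropriate pair‑correlation input for $\xi'(s)$ and take it as given.

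Next, from that identity I would extract, for every admissible $f$ in the Cohn–Elkies class $\mathcal A$ described in Section~\ref{prelims} — that is, $f\in L^1(\r)$ with $f(0)>0$, $f\ge 0$ on $\r$, and $\widehat f(x)\le 0$ for $|x|\ge 1$ — an inequality of the shape
\begin{equation*}
N_1^*(T)\le \frac{1}{f(0)}\Bigl(\widehat f(0)+\int_{-1}^{1}\widehat f(x)\,K_1(x)\d x + o(1)\Bigr)N_1(T),
\end{equation*}
where $K_1$ is the explicit kernel for $\xi'$ (the analogue of $|x|$ in Montgomery's bound, modified by the $\xi'$ form factor). The relaxation of the support condition to the sign condition $\widehat f(x)\le 0$ for $|x|\ge1$ is exactly what makes the problem a linear program over $\mathcal A$, and is justified word‑for‑word as in the $\zeta$ case: the tail of the integral has the right sign to be dropped. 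Then I would set $Q_1(f)$ to be the right‑hand functional, note that $\inf_{f\in\mathcal A}Q_1(f)$ is a convex optimization problem, and invoke the semidefinite programming machinery of Section~\ref{prelims} to produce an explicit feasible $f$ (a polynomial times a Gaussian, or similar) for which $Q_1(f)\le 1.1175$. Certifying this numerically‑obtained bound rigorously — i.e. verifying the SDP feasibility certificate in exact or interval arithmetic, and checking the sign condition on $\widehat f$ on $|x|\ge1$ — is the step that carries the real work; the rest is bookkeeping.

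The main obstacle I anticipate is not the optimization itself but getting the $\xi'$ pair‑correlation kernel $K_1$ correct and in a form compatible with the SDP setup. Unlike $\zeta$, the logarithmic derivative of $\xi'$ brings in the zeros of $\xi$ as poles, so the explicit‑formula computation produces extra terms, and one must check that (under RH, and with the relaxed sign condition on $\widehat f$) these terms either vanish asymptotically or have a definite sign that lets them be discarded, so that the clean inequality above survives. Once $K_1$ is pinned down and shown to fit the same convex framework, the numerical search proceeds exactly as for Theorems~\ref{thmNstar} and~\ref{thmpaircorreltation}, and the constant $1.1175$ follows by exhibiting the corresponding certificate. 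I would close by remarking that, as elsewhere in the paper, the construction outperforms any bandlimited choice of $f$, which is why the bound improves on previous $\xi'$ results.
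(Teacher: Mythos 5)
Your proposal follows essentially the same route as the paper: the pair-correlation asymptotic for $\xi'(s)$ is taken from Farmer–Gonek–Lee (their Theorem 1.1 gives the kernel $|x|-4|x|^2+\sum_{k\ge1}c_k|x|^{2k+1}$, valid for $|x|\le 1-\delta$, whence the $O(\delta)$ in Lemma~\ref{thmsimplezerosofxiprime}), and the resulting functional $\Z_1$ is minimized over the Cohn–Elkies class by the same SDP machinery, with the series truncated at $k=15$ and the tail bounded. No substantive difference.
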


Defining the functions $N_{1,s}(T)$ and $N_{1,d}(T)$ (quantity of simple and distinct zeros respectively) for $\xi'(s)$ and using the inequalities
\[
{N_{1,s}(T) \geq 2 N_1(T) - N_1^*(T)}
\]
and
\[
{N_{1,d}(T) \geq \frac{3}{2}N_1(T)-\frac{1}{2}N_1^*(T)},
\]
that can be derived using the analogues of \eqref{eq:firstineq} and \eqref{eq:secondineq} for $\xi'(s)$, we obtain the following corollary. 

\begin{corollary} 
Assuming RH, we have
\[	
N_{1,s}(T)\geq (0.8825+o(1))N_1(T). 
\]
and
\[
N_{1,d}(T)\geq (0.9412+o(1))N_1(T). 
\]
\end{corollary}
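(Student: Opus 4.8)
The plan is to obtain both bounds directly from Theorem~\ref{thmn1star} together with the two multiplicity inequalities recorded just above the corollary; once Theorem~\ref{thmn1star} is in hand the argument is essentially arithmetic.

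First I would justify the two elementary inequalities for $\xi'(s)$, mirroring \eqref{eq:firstineq} and \eqref{eq:secondineq}. For a zero $\rho_1$ of multiplicity $m_{\rho_1}$, the quantity $2-m_{\rho_1}$ equals $1$ when $m_{\rho_1}=1$ and is $\leq 0$ when $m_{\rho_1}\geq 2$; summing this pointwise bound over $0<\gamma_1\leq T$ with multiplicity gives
\[
N_{1,s}(T)\;\geq\;\sum_{0<\gamma_1\leq T}(2-m_{\rho_1})\;=\;2N_1(T)-N_1^*(T).
\]
Likewise, $\tfrac{(m_{\rho_1}-2)(m_{\rho_1}-3)}{m_{\rho_1}}$ equals $2$ for $m_{\rho_1}=1$, equals $0$ for $m_{\rho_1}\in\{2,3\}$, and is positive for $m_{\rho_1}\geq 4$, so it dominates twice the indicator that $\rho_1$ is simple; summing with multiplicity and using that $N_1^*(T)$, $N_1(T)$ and $N_{1,d}(T)$ equal $\sum m_{\rho_1}^2$, $\sum m_{\rho_1}$ and $\sum 1$ over the distinct zeros, one gets $2N_{1,s}(T)\leq N_1^*(T)-5N_1(T)+6N_{1,d}(T)$. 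Combining this with the previous display and solving for $N_{1,d}(T)$ yields $N_{1,d}(T)\geq \tfrac32 N_1(T)-\tfrac12 N_1^*(T)$.

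Second, I would feed in Theorem~\ref{thmn1star}, i.e.\ $N_1^*(T)\leq(1.1175+o(1))N_1(T)$ under RH. Substituting into the first inequality gives $N_{1,s}(T)\geq(2-1.1175+o(1))N_1(T)=(0.8825+o(1))N_1(T)$, and into the second gives $N_{1,d}(T)\geq(\tfrac32-\tfrac{1.1175}{2}+o(1))N_1(T)=(0.94125+o(1))N_1(T)$, which after rounding the constant down yields the stated $0.9412$.

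There is no real obstacle in the corollary itself: the only points needing care are the finite case-check behind the two pointwise multiplicity inequalities (together with the observation that the relevant quadratic in $m$ is non-negative for $m\geq 4$) and the routine absorption of lower-order terms into $o(1)$. All of the genuine difficulty is upstream, in proving the bound $N_1^*(T)\leq(1.1175+o(1))N_1(T)$ of Theorem~\ref{thmn1star} via the pair-correlation/semidefinite-programming machinery; the corollary is merely its arithmetic consequence.
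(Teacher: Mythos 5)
Your proposal is correct and matches the paper's argument: the paper likewise derives the corollary by combining Theorem~\ref{thmn1star} with the two displayed multiplicity inequalities (whose verification is exactly the finite case-check you describe, the paper leaving it as the "analogue" of \eqref{eq:firstineq} and \eqref{eq:secondineq}). The arithmetic $2-1.1175=0.8825$ and $\tfrac32-\tfrac{1.1175}{2}=0.94125$ is as in the paper.
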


The best previous result is due to Farmer, Gonek and Lee \cite{FGL}, showing that more than $85.83\% $ of the zeros of $\xi'(s)$ are simple. 

\section{Derivation of the optimization problems}\label{prelims}

Let $\A_{LP}$ be the class of even continuous functions $f\in L^1(\r)$ satisfying the following conditions:
\begin{enumerate}
\item $\ft f(0)=f(0)=1$;
\item $\ft f\geq 0$;
\item $f$ is eventually non-positive.
\end{enumerate}
By eventually non-positive we mean that $f(x)\leq 0$ for all sufficiently  large $|x|$. We then define the last sign change of $f$ by
\begin{equation*}
r(f)=\inf \big\{r>0: f(x)\leq 0 \text{ for } |x|\geq r\big\}.
\end{equation*}
It is easy to show that if $f\in\A_{LP}$, then $\ft f \in L^1(\r)$.

A remarkable breakthrough in the sphere problem was achieved by Cohn and Elkies in \cite{CE}, where they showed that if $\Delta(\r^d)$ is the highest sphere packing density in $\r^d$, then
$$
\Delta(\r^d) \leq \Q(f)
$$
for any $f\in  \A_{LP}(\r^d)$ (this is the analogous class in higher dimensions defined for radial functions $f$), where
$$
\Q(f) = \frac{\pi^{d/2}}{(d/2)!2^{d}}r(f)^d.
$$ 
With this approach they generated numerical upper bounds, called linear programming bounds, for the packing density for dimensions up to $36$ (nowadays it goes much higher) that improved every single upper bound known at the time and still are the current best. These upper bounds in dimensions $8$ and $24$ revealed to be extremely close to the lower bounds given by the $E_8$ root lattice and the $\Lambda_{24}$ Leech lattice, revealing that in these special dimensions the linear programming approach could exactly act as the dual problem. This is what inspired Viazovska et. al. \cite{V,CKMRV} to follow their program and solve the sphere packing problem in dimensions $8$ and $24$. What is interesting and surprising to us is that the same space $\A_{LP}$ can be used (but with a functional different than $\Q(f)$) to produce numerical bounds in analytic number theory. 

The general strategy to study problems \eqref{prob1} and \eqref{prob2} is based on Montgomery's function
\begin{align*}
F(x,T)=\frac{1}{N(T)}\sum_{0<\gamma, \gamma'\leq T}T^{ix(\gamma-\gamma')}w(\gamma-\gamma'),
\end{align*}
where the sum is over pairs of ordinates of zeros (with multiplicity) of $\zeta(s)$ and $w(u)=\frac{4}{4+u^2}$. For each $T$, the function $x \mapsto F(x,T)$ is even, real, and as observed independently by Mueller and Heath-Brown, non-negative. The first step is to use Fourier inversion to obtain
\begin{equation}\label{explicit_formula}
\displaystyle\sum_{0<\gamma, \gamma'\leq T}g\bigg((\gamma-\gamma')\dfrac{\log T}{2\pi}\bigg)w(\gamma-\gamma')=N(T)\int_{-\infty}^{\infty}\widehat{g}(x)F(x,T)\d x,
\end{equation}
for suitable functions $g$, and use some known asymptotic estimate for $F(x,T)$ as $T\to\infty$ (which is proven only under RH or GRH). Secondly, after a series of inequalities, we produce a minimization problem over $\A_{LP}$ for some functional $\Z$. We then approach the problem numerically, using the class of functions used for the sphere packing problem in \cite{CE} and sum-of-squares/semidefinite programming techniques to optimize over these functions. The same basic strategy can be, in principle, carried out for other functions where we have a pair correlation approach. Indeed, we will also derive functionals related to the zeros of $\xi'(s)$ and a certain average of primitive Dirichlet L-functions.

\subsection{Bounding $N^*(T)$ and $N(x,T)$}
Ultimately, the functionals we need to define depend on the asymptotic behavior of $F(x,T)$. To analyze the function $N^{*}(T)$ we define the functionals
\[
\Z(f)  =r(f) + \frac{2}{r(f)}\int_0^{r(f)}f(x)x\d x
\]
and
\[
\wt \Z(f)=r(f) + \frac{2}{r(f)}\int_0^{r(f)}f(x)x\d x + 3\int_{r(f)}^{\frac{3}{2}r(f)}f(x)\d x -  \frac{2}{r(f)}\int_{r(f)}^{\frac{3}{2}r(f)}f(x)x\d x.
\]

\begin{lemma}\label{thmsimplezerosofxi}
Let $f\in \A_{LP}$. Assuming RH, we have
\[
N^*(T)\leq (\Z(f)+o(1))N(T).
\]
Assuming GRH, for every fixed sufficiently small $\delta>0$, we have
\[
N^*(T)\leq (\wt \Z(f) + O(\delta) + o(1))N(T).
\]
\end{lemma}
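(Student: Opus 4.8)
The plan is to start from Montgomery's explicit formula \eqref{explicit_formula} and choose the test function $g$ so that the left-hand side dominates $N^*(T)$, then feed in the known asymptotics for $F(x,T)$ on the right-hand side. Concretely, for $f\in\A_{LP}$ with last sign change $r=r(f)$, I would rescale and set $g(x)=f(x/r)$ (or a closely related dilate), so that $\widehat g(x)=r\,\widehat f(rx)$ is non-negative and $g(0)=f(0)=1$. Since all zeros are on the critical line under RH, each diagonal term $\gamma=\gamma'$ contributes $g(0)w(0)=1$, and summing over $0<\gamma\le T$ with multiplicity produces exactly $N^*(T)$ on the left; the off-diagonal terms are non-negative because $w\ge 0$ — wait, $g$ need not be non-negative, so instead I should exploit that $f$ is non-positive outside $[-r,r]$: after rescaling, $g$ is non-positive outside a window of size comparable to the average spacing, and the surviving off-diagonal terms with $|\gamma-\gamma'|\frac{\log T}{2\pi}\le r$ can be discarded using $w>0$ and the fact that one only needs a lower bound for the left side in terms of $N^*(T)$. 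This gives $N^*(T)\le N(T)\int_{-\infty}^\infty \widehat g(x)F(x,T)\,\text{\rm d}x + o(N(T))$.

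Next I would insert the asymptotic information about $F(x,T)$. Montgomery's theorem (valid under RH) gives $F(x,T)=(1+o(1))(T^{-2|x|}\log T + |x|)$ uniformly for $|x|\le 1-\varepsilon$, and the Mueller/Heath-Brown non-negativity $F(x,T)\ge 0$ for all $x$. Splitting $\int \widehat g F$ into $|x|\le 1$ and $|x|>1$: on $|x|\le 1$ use the asymptotic, where the $T^{-2|x|}\log T$ piece integrates against $\widehat g$ to contribute $g(0)=1$ (a Laplace-type concentration at the origin), and the $|x|$ piece contributes $\int_{-1}^1\widehat g(x)|x|\,\text{\rm d}x$; on $|x|>1$ we only have $F\ge 0$, but there $\widehat f(rx)\le 0$ precisely when $r|x|\ge r$, i.e. always on $|x|\ge 1$ — this is exactly why the relaxed condition "$\widehat f(x)\le 0$ for $|x|\ge 1$" in the $\A_{LP}$ framework is the right one — so $\int_{|x|>1}\widehat g F\le 0$ and can be dropped. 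Unwinding the rescaling, $\int_{-1}^1 \widehat g(x)|x|\,\text{\rm d}x = \frac{1}{r}\int_{-r}^r \widehat f(u)|u|\,\text{\rm d}u$ and then Plancherel/integration by parts converts this into $\frac{2}{r}\int_0^r f(x)\,x\,\text{\rm d}x$ after accounting for $\widehat{\widehat f}(x)=f(-x)=f(x)$; combined with the $1$ from the delta-term and noting $r=r(f)$ appears from the rescaling of $g(0)\cdot$(support normalization), one recovers exactly $\Z(f)=r(f)+\frac{2}{r(f)}\int_0^{r(f)} f(x)x\,\text{\rm d}x$.

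For the GRH statement the extra ingredient is the improved range of validity of the $F(x,T)$ asymptotic. By the work of Goldston–Gonek–Özlük–Snyder (building on Montgomery), assuming GRH one knows $F(x,T)=(1+o(1))(T^{-2|x|}\log T+|x|)$ for $|x|\le 1$ and, crucially, a further bound such as $F(x,T)\le \tfrac{3}{2}-|x|+o(1)$ (or the sharper $\min(\cdot)$ shape) on $1\le|x|\le \tfrac{3}{2}$, valid up to an error that degrades as one approaches the endpoint — this is where the parameter $\delta>0$ enters, shrinking the problematic boundary layer near $|x|=\tfrac32$ at the cost of an $O(\delta)$ term. On $1\le|x|\le\frac32$ we can no longer throw the integral away, because $\widehat f$ may be positive there; instead we bound $\widehat g(x)F(x,T)\le \widehat g(x)(\tfrac32-|x|)$ where $\widehat g\ge0$, and on $|x|>\tfrac32$ we again use $\widehat f\le 0$ (so $\widehat g\le 0$) together with an $L^1$/tail bound on $F$ to absorb that range into the $o(1)$, modulo the $O(\delta)$ slack. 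Translating $\int_{-3/2}^{3/2}\widehat g(x)(\tfrac32 - |x|)\,\text{\rm d}x$ back through the rescaling and Plancherel produces precisely the three extra terms in $\wt\Z(f)$: the $3\int_{r}^{3r/2} f$ and $-\frac{2}{r}\int_{r}^{3r/2} f\,x\,\text{\rm d}x$ contributions, with the main $\Z(f)$-part coming from $|x|\le1$ as before.

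The main obstacle I anticipate is the bookkeeping around the boundary $|x|=1$ (and $|x|=\tfrac32$ under GRH): Montgomery's asymptotic for $F(x,T)$ is only uniform on compact subsets of $(-1,1)$, and near the endpoint one must either invoke the sharper uniform estimates now available or handle a shrinking neighborhood of $|x|=1$ separately using only $F\ge 0$ plus crude upper bounds. Controlling that boundary layer — showing its contribution is $o(1)$ (under RH) or $O(\delta)+o(1)$ (under GRH) while simultaneously ensuring the Laplace-concentration of the $T^{-2|x|}\log T$ term genuinely delivers the clean constant $g(0)=1$ and not something off by the measure of the excised interval — is the technical heart of the argument; everything else is Fourier duality and the non-negativity of $\widehat f$, $w$, and $F$.
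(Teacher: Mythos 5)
Your overall architecture is the paper's: Montgomery's explicit formula, a diagonal lower bound for the zero sum, positivity of $F(x,T)$ to discard the range where the transform is non-positive, and the Montgomery/GGOS asymptotics on the complementary range. But as written the proof does not close, because you have the roles of $f$ and $\widehat f$ swapped in the choice of test function, and this breaks both key inequalities. You set $g(x)=f(x/r)$, so that $\widehat g(x)=r\widehat f(rx)\geq 0$ everywhere. Then (i) on the spectral side you cannot drop $\int_{|x|>1}\widehat g\,F$, since that integrand is $\geq 0$, not $\leq 0$ --- your later claim that ``$\widehat f(rx)\le 0$ for $|x|\ge1$'' contradicts the defining condition $\widehat f\geq 0$ of $\A_{LP}$; and (ii) on the zero-sum side you cannot reduce to the diagonal: your proposed fix (``discard the surviving off-diagonal terms with $|\gamma-\gamma'|\frac{\log T}{2\pi}\le r$'') goes the wrong way, because the off-diagonal terms where $g<0$ (which exist, since $f$ is non-positive outside $[-r,r]$ and may oscillate inside) make the left side \emph{smaller} than the diagonal, so discarding them does not yield $\mathrm{LHS}\gtrsim N^*(T)$. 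The correct choice, as in the paper, is $g(x)=\widehat f(x/r(f))/r(f)$: then $g\geq 0$ (so the left side is at least $g(0)N^*(T)=N^*(T)/r(f)$ using $w>0$), while $\widehat g(x)=f(r(f)x)\leq 0$ for $|x|\geq 1$ (so the $|x|>1$ part of the integral against $F\geq 0$ can be discarded). With that single swap your subsequent computation of the functional $\Z(f)$ is correct.

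The same confusion infects the GRH part, where it becomes a substantive error about the input: the Goldston--Gonek--\"Ozl\"uk--Snyder result is a \emph{lower} bound $F(x,T)\geq \tfrac32-|x|-o(1)$ on $1\leq|x|\leq\tfrac32-\delta$, not the upper bound $F(x,T)\leq\tfrac32-|x|+o(1)$ you invoke (such an upper bound is not known and would be inconsistent with the pair correlation conjecture near $|x|=\tfrac32$). The lower bound is useful precisely because $\widehat g\leq 0$ on that range, giving $\widehat g(x)F(x,T)\leq\widehat g(x)(\tfrac32-|x|)$; your version needs $\widehat g\geq 0$ there together with a false estimate. Finally, your worry about a boundary layer at $|x|=1$ is unnecessary: the Goldston--Montgomery refinement used in the paper is uniform on all of $|x|\leq 1$, so no excision near the endpoint is needed under RH.
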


\begin{proof}
We start assuming only RH. Refining the original work of Montgomery \cite{M}, Goldston and Montgomery \cite[Lemma 8]{GM} proved that
\begin{equation}\label{Fasymptotics}
F(x,T) = \big(T^{-2|x|}\log T+|x|\big)(1+o(1)),
\end{equation}
uniformly for $|x|\leq 1$. Let $f\in \A_{LP}$ and let $g(x)=\ft f(x/r(f))/r(f)$. We can then use formula \eqref{explicit_formula} in conjunction with the asymptotic formula above to obtain
\begin{align*}
\sum_{0<\gamma, \gamma'\leq T}  g\bigg((\gamma-\gamma')\dfrac{\log T}{2\pi}\bigg)w(\gamma-\gamma')  = N(T)\left[\widehat{g}(0) 
+\int_{-1}^{1}{\ft g}(x)|x|\d x + \int_{|x|>1}{\ft g}(x)F(x,T)\d x + o(1)
\right],
\end{align*}
where the $o(1)$ above is justified since $\ft g$ is continuous and $T^{-2|x|}\log T\to{\bo  \delta}_0(x)$ as $T\to\infty$ (in the distributional sense). Moreover, since $F(x,T)$ is non-negative and $\ft g(x)\leq 0$ for $|x|\geq 1$ we deduce that 
\begin{align*}
 \sum_{0<\gamma, \gamma'\leq T}g\bigg((\gamma-\gamma')\dfrac{\log T}{2\pi}\bigg)w(\gamma-\gamma') &  \leq N(T)\left[ \ft g(0)+2\int_{0}^{1}{\ft g}(x)x\d x + o(1)\right]  = N(T)\left[\frac{\Z(f)}{r(f)} + o(1)\right].
\end{align*}
On the other hand, clearly we have
\begin{equation}\label{eq:oneineq}
\sum_{0<\gamma, \gamma'\leq T} g\bigg((\gamma-\gamma')\dfrac{\log T}{2\pi}\bigg)w(\gamma-\gamma') \geq g(0)\sum_{0<\gamma\leq T} m_\rho = \frac{N^*(T)}{r(f)}.
\end{equation}
Combining these results we show the first inequality in the theorem.  

Assume now GRH. It is then shown in \cite{GGOS} that for any fixed and sufficiently small $\delta >0$ we have
\begin{equation}\label{Fasymptoticmore}
F(x,T) \geq \frac{3}{2}-|x| - o(1),
\end{equation}
uniformly for $1\leq |x|\leq \frac{3}{2}-\delta$ as $T\to \infty$. Using this estimate together with \eqref{Fasymptotics} and the fact that $\ft g(x)\leq 0$ for $|x|\geq 3/2-\delta$, we obtain
\begin{align*}
\sum_{ 0<\gamma, \gamma'\leq T}g\bigg((\gamma-\gamma')\dfrac{\log T}{2\pi}\bigg)w(\gamma-\gamma')& \leq N(T)\left[{\ft g}(0) +2\int_{0}^{1}{\ft g}(x)x\d x + 2\int_{1}^{\frac{3}{2}-\delta}{\ft g}(x)\left(\frac{3}{2}-x\right)\d x + o(1)
\right] \\
& = N(T)\left[\frac{\wt \Z(f)}{r(f)}+o(1) +O(\delta)\right].
\end{align*}
Arguing as before, using \eqref{eq:oneineq}, we complete the proof of the lemma.
\end{proof}

To analyze $N(x,T)$ we define the functional
\[
\P(f) = \inf \big\{\la>0: p_f(\la) > 0 \big\},
\]
where 
\[
p_f(\la)=-1+\frac{\la}{r(f)}+ \frac{2r(f)}{\la}\int_0^{\frac{\la}{r(f)}}\ft f(x)x\d x,
\]
and the functional
\[
\wt \P(f) = \inf\big\{\la>0: \wt p_f(\la) > 0 \big\},
\]
where 
$$
\wt p_f(\la)=-1+\frac{\la}{r(f)}+ \frac{2r(f)}{\la}\int_0^{\frac{\la}{r(f)}}\ft f(x)x\d x +3\int_{\frac{\la}{r(f)}}^{\frac{3\la}{2r(f)}}\ft f(x)\d x -  \frac{2r(f)}{\la}\int_{\frac{\la}{r(f)}}^{\frac{3\la}{2r(f)}}\ft f(x)x\d x.
$$
Note that these functionals are well defined since $p_f$ and $\wt p_f$ are $C^1$ functions that assume $-1$ at $\la=0$, and using the fact that $\widehat{f}\in L^1(\mathbb{R})$ one can show 
$$
\lim_{\la\to\infty} \frac{p_f(\la)}{\la} = \lim_{\la\to\infty} \frac{\wt p_f(\la)}{\la} =\frac{1}{r(f)} > 0.
$$
\begin{lemma}\label{thmsmallgapszerosofxi}
Let $f\in \A_{LP}$ and  $\ep>0$. Assuming RH and \eqref{NstarNconjecure}, we have
$$
N(\P(f)+\varepsilon,T)\gg N(T).
$$
Assuming GRH and \eqref{NstarNconjecure}, we have
$$
N(\wt \P(f)+\varepsilon,T)\gg N(T).
$$
\end{lemma}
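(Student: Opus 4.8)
The plan is to mirror the structure of the proof of Lemma~\ref{thmsimplezerosofxi}, using the same explicit formula \eqref{explicit_formula} but now choosing the test function $g$ so that its \emph{Fourier transform} (rather than $g$ itself) is a rescaling of $f$, and then extracting information about $N(x,T)$ from a lower bound rather than an upper bound. Concretely, given $f\in\A_{LP}$ and a parameter $\lambda>0$, I would set $g = \ft h$ where $h(x)=f(x r(f)/\lambda)$ (up to normalization), so that $\ft g$ is, after rescaling, a copy of $f$ supported morally in $[-\lambda,\lambda]$ in the relevant variable, and $g(x)=(\lambda/r(f))\,f(\lambda x / r(f))$ or a similar scaling; the point is that $g\ge 0$ everywhere is \emph{not} needed here, whereas the support-type condition ``$\ft g(x)\le 0$ for $|x|\ge \lambda$'' is exactly what the sign-change normalization of $\A_{LP}$ delivers.

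Next I would run the left-hand side of \eqref{explicit_formula}. The diagonal terms $\gamma=\gamma'$ contribute $g(0)\sum_{0<\gamma\le T} m_\rho = g(0) N^*(T)$, and under \eqref{NstarNconjecure} this is $(g(0)+o(1))N(T)$. The off-diagonal terms are supported on pairs with $0<|\gamma-\gamma'|$; splitting according to whether $|\gamma-\gamma'|\le 2\pi x/\log T$ or not, and using that $|g|$ is bounded and that $w$ is bounded, one bounds the ``far'' off-diagonal terms by a constant times $N(x,T)$ plus tails, while the ``near'' terms are absorbed similarly — the upshot being an inequality of the shape $\text{(LHS of \eqref{explicit_formula})} \le (g(0)+o(1))N(T) + C\,N(x,T)$ (this is exactly the kind of bookkeeping done in Montgomery's original argument and in \cite{GGOS}). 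For the right-hand side, I would use the asymptotics \eqref{Fasymptotics} (under RH) or \eqref{Fasymptotics} together with \eqref{Fasymptoticmore} (under GRH) together with the sign condition $\ft g(x)\le 0$ for $|x|\ge \lambda$ (respectively $|x|\ge \tfrac32\lambda-\delta$) to get a \emph{lower} bound
\[
N(T)\int_{-\infty}^{\infty}\ft g(x) F(x,T)\d x \ge N(T)\Big[\ft g(0) + \int_{-1}^{1}\ft g(x)|x|\d x + o(1)\Big]
\]
under RH, and the analogous expression with the extra $[1,\tfrac32-\delta]$ term under GRH. Rescaling back, $\ft g(0) + \int_{-1}^1\ft g(x)|x|\,\d x$ becomes (a multiple of) $\lambda/r(f) + (2r(f)/\lambda)\int_0^{\lambda/r(f)} f(x) x\,\d x$, i.e. $p_f(\lambda)+1$ (up to normalization), and the GRH version becomes $\wt p_f(\lambda)+1$.

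Combining the two sides, after dividing by $N(T)$, gives $g(0) + o(1) + C\,N(x,T)/N(T) \ge g(0) + (p_f(\lambda) + o(1))$ in the RH case (with the appropriate normalization of $g(0)$ cancelling), hence $N(x,T) \gg p_f(\lambda) N(T)$ whenever $p_f(\lambda)>0$. Taking $x = \P(f)+\varepsilon$ and noting that, by definition of $\P(f)$ as an infimum, $p_f(\P(f)+\varepsilon')>0$ for some $0<\varepsilon'<\varepsilon$ (using that $p_f$ is $C^1$ and the limit $p_f(\lambda)/\lambda\to 1/r(f)>0$ guarantees $p_f$ is positive somewhere just past the infimum), one gets $N(\P(f)+\varepsilon,T)\gg N(T)$; the GRH case is identical with $\wt p_f$, $\wt\P(f)$, the estimate \eqref{Fasymptoticmore}, and an extra $O(\delta)$ that is harmless after letting $\delta$ be small. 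The main obstacle is the off-diagonal bookkeeping: one must carefully justify that the off-diagonal part of the left side of \eqref{explicit_formula} is controlled by $N(x,T)$ (and not something larger), which requires tracking the decay of $g$ and $\ft g$ and handling the weight $w(\gamma-\gamma')=4/(4+(\gamma-\gamma')^2)$ and the long-range tails — this is where uniformity of \eqref{Fasymptotics} over $|x|\le 1$ and integrability of $\ft f$ get used, and it is the step most prone to hidden error.
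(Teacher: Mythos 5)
Your proposal follows the paper's skeleton: apply \eqref{explicit_formula} to a dilated test function, lower-bound the integral side via \eqref{Fasymptotics} (plus \eqref{Fasymptoticmore} under GRH), identify the diagonal with $N^*(T)=(1+o(1))N(T)$ using \eqref{NstarNconjecure}, control the off-diagonal by $N(\lambda,T)$, and then take $\lambda$ arbitrarily close to $\P(f)$ using continuity of $p_f$ and monotonicity of $\lambda\mapsto N(\lambda,T)$. However, you have interchanged the roles of $f$ and $\ft f$, and as written both halves of your argument fail. The correct choice is $g(x)=f(r(f)x/\lambda)$, so that the function appearing in the \emph{sum} in \eqref{explicit_formula} is the eventually non-positive one, while $\ft g$ is a dilate of $\ft f\ge 0$. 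The step
$\int_{\r}\ft g(x)F(x,T)\d x\ \ge\ \int_{|x|\le 1}\ft g(x)F(x,T)\d x$
requires $\ft g\ge 0$ on $|x|>1$ (because $F\ge 0$); the condition you invoke, ``$\ft g(x)\le 0$ for $|x|\ge\lambda$,'' has exactly the wrong sign and says nothing on $1<|x|<\lambda$, so the displayed lower bound does not follow from it. The two hypotheses actually used are $g(u)\le 0$ for $|u|\ge\lambda$ (to truncate the sum) and $\ft g\ge 0$ everywhere (to lower-bound the integral); you have stated their Fourier duals. Consistently with this mix-up, your rescaled functional ends up with $f$ inside the integral, whereas $p_f$ is defined with $\ft f$.

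The off-diagonal bookkeeping is also not a matter of ``tails.'' The pairs with $\gamma-\gamma'>2\pi\lambda/\log T$ are simply \emph{discarded} because for them the argument of $g=f(r(f)\,\cdot\,/\lambda)$ exceeds $r(f)$, so those terms are non-positive and removing them can only increase the sum; no decay estimate is needed or available. Indeed, a trivial absolute-value bound on the far pairs loses a factor of $\log T$: there are $\asymp T\log^2T$ pairs at bounded distance, against $N(T)\asymp T\log T$. The near pairs number exactly $2N(\lambda,T)$, and each term is at most $\|f\|_\infty\|w\|_\infty=f(0)=1$; note that $\|f\|_\infty=f(0)$ itself uses $\ft f\ge 0$, so Fourier positivity enters on the sum side too. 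Your endgame (choosing $\lambda=\P(f)+\varepsilon'$ with $p_f(\lambda)>0$) is correct. With the roles of $f$ and $\ft f$ restored as above, the argument becomes the paper's proof.
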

\begin{proof}
Let $f\in\A_{LP}$, $\lambda>0$, and set $g(x)= f(r(f)x/\la)$. Assuming RH, we have 
\begin{align*}
\sum_{0<\gamma, \gamma'\leq T}g\bigg((\gamma-\gamma')\dfrac{\log T}{2\pi}\bigg)w(\gamma-\gamma') 
&= N(T)\int_{-\infty}^{\infty}\widehat{g}(x)F(x,T)\d x\\
&\geq N(T)\left[\ft g(0)+2\int_{0}^{1}\ft g(x)x\d x+o(1)\right]
 \\ & = N(T)\left[1+p_f(\la)+o(1)\right].
\end{align*}
Applying formula \eqref{explicit_formula} in conjunction with \eqref{Fasymptotics}, while assuming GRH, and using in addition \eqref{Fasymptoticmore}, we have
\begin{align*}
\sum_{0<\gamma, \gamma'\leq T}g\bigg((\gamma-\gamma')\dfrac{\log T}{2\pi}\bigg)w(\gamma-\gamma') 
&= N(T)\int_{-\infty}^{\infty}\widehat{g}(x)F(x,T)\d x\\
&\geq N(T)\left[\ft g(0)+2\int_{0}^{1}\ft g(x)x\d x + 2\int_1^{\frac32-\delta}\ft g(x)\left(\frac{3}{2}-x\right)\d x + o(1)\right]\\
&= N(T)\left[1+\wt p_f(\la)+o(1)\right].
\end{align*}
Since $\ft f\geq 0$, we have $\|f\|_{\infty}=f(0)=1$. Recall now the pair correlation function $N(x,T)$ defined in \eqref{paircorrelfunction}. We have
\begin{align*}
\sum_{0<\gamma, \gamma'\leq T}g\bigg((\gamma-\gamma')\dfrac{\log T}{2\pi}\bigg)w(\gamma-\gamma') & = N^*(T) +2\sum_{\substack{0<\gamma, \gamma'\leq T\\ 0<\gamma-\gamma'}}f\bigg((\gamma-\gamma')\dfrac{r(f)\log T}{2\pi\la}\bigg)w(\gamma-\gamma')  \\
& \leq N^*(T) +2\sum_{\substack{0<\gamma, \gamma'\leq T\\ 0<\gamma-\gamma' \leq \frac{2\pi \la}{\log T}}}f\bigg((\gamma-\gamma')\dfrac{r(f)\log T}{2\pi\la}\bigg)w(\gamma-\gamma') \\
& \leq N^*(T)+2N(\la,T) \\ & = (1+o(1))N(T) + 2N(\la,T),
\end{align*}
where in the last step we have used \eqref{NstarNconjecure}. We then obtain, assuming RH, that 
\begin{align*} 
\frac{N(\la,T)}{N(T)}\geq \frac{p_f(\la)}{2}+o(1).
\end{align*}
Similarly assuming GRH. Noting that $N(\la,T)$ increases with $\la$, so we can choose $\la$ arbitrarily close to $\P(f)$, we obtain the desired result.
\end{proof}

\subsection{Bounding $N_{\Phi,s}(Q)$}
Define the following functional over $\A_{LP}$
$$
\L(f) = \frac{r(f)}{2} + \dfrac{4}{r(f)}\int_{0}^{\frac{r(f)}{2}}f(x)x\d x+2\int_{\frac{r(f)}{2}}^{r(f)}f( x)\d x.
$$
We have the following lemma.
\begin{lemma}\label{thmnphi}
Let $f\in\A_{LP}$. Assuming GRH, for every fixed small $\delta>0$ we have
$$
N_{\Phi,s}(Q) \geq (2- \L(f) + O(\delta) + o(1))N_{\Phi}(Q).
$$
\end{lemma}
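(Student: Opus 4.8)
The plan is to mimic the structure of the proofs of Lemmas~\ref{thmsimplezerosofxi} and \ref{thmsmallgapszerosofxi}, but now working inside the averaged framework of Chandee, Lee, Liu and Radziwi\l\l \ \cite{CLLR} rather than with Montgomery's $F(x,T)$. The key input will be the analogue of the asymptotic formulas \eqref{Fasymptotics} and \eqref{Fasymptoticmore} in the $L$-function setting: from \cite{CLLR} one has an explicit-formula identity expressing, for a suitable even test function $g$,
\[
\sum_{Q\le q\le 2Q}\frac{W(q/Q)}{\varphi(q)}\sum_{\substack{\chi \ (\mathrm{mod}\ q)\\ \text{primitive}}}\sum_{\gamma_\chi,\gamma'_\chi} \widehat g(\gamma_\chi-\gamma'_\chi)\,\big|\M\Phi(i\gamma_\chi)\big|\,\big|\M\Phi(i\gamma'_\chi)\big|
\]
in terms of an integral $\int \widehat g(x)\, \mathcal{F}_\Phi(x,Q)\,dx$ against a nonnegative pair-correlation density $\mathcal{F}_\Phi(x,Q)$, together with the asymptotic behavior of $\mathcal{F}_\Phi$: a delta-mass contribution giving the diagonal $N_{\Phi}(Q)$, a known linear piece on $|x|\le 1/2$ (this is where the factor $1/2$ in $\L$ comes from — the relevant scaling variable is $\log Q$ rather than $\log T$, compressing the support), and a GRH lower bound $\mathcal{F}_\Phi(x,Q)\ge \text{(something)} - o(1)$ on $1/2\le|x|\le 1-\delta$ analogous to \eqref{Fasymptoticmore}. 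I would first write $g(x) = \widehat f(2x/r(f))\cdot\text{(normalization)}$ (or its reciprocal-scaling partner, as in the two previous lemmas) so that $\widehat g$ is supported appropriately and $\widehat g(x)\le 0$ for $|x|\ge r(f)/2$ in the rescaled variable, matching the region where we only have the nonnegativity of $\mathcal{F}_\Phi$.

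Next I would run the same two-sided estimate. For the upper bound on the weighted sum: using nonnegativity of $\mathcal{F}_\Phi$ together with the sign condition on $\widehat g$ to discard the tail, and plugging in the delta-mass, the linear piece, and the GRH estimate, I expect to land on
\[
\sum(\cdots) \le N_\Phi(Q)\Big[\text{(diagonal term)} + 2\!\int_0^{1/2}\!\widehat g(x)\,x\,dx + 2\!\int_{1/2}^{1-\delta}\!\widehat g(x)\big(\tfrac{?}{?}-x\big)dx + o(1)\Big],
\]
which after unfolding the definition of $g$ and changing variables becomes $N_\Phi(Q)\big[\L(f) + O(\delta) + o(1)\big]$ — the three terms of $\L(f)$ matching the diagonal, the $\int_0^{r(f)/2} f(x)x\,dx$ term, and the $\int_{r(f)/2}^{r(f)} f(x)\,dx$ term respectively. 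For the lower bound I would split the left-hand sum into its diagonal contribution, which is exactly $N_{\Phi,s}(Q)$ plus the multiplicity-weighted excess over simple zeros (here one uses that $\M\Phi(ix)\ge 0$ and $\Phi\ge 0$, so each diagonal term $|\M\Phi(i\gamma_\chi)|^2$ appears with a coefficient $\ge m_{\gamma_\chi}$, exactly as $g(0)\sum m_\rho$ appeared in \eqref{eq:oneineq}), bounded below using $f(0)=1$; the off-diagonal terms are dropped since $f$ need not be nonnegative — this is why one wants the reciprocal scaling so that $g$ itself is a rescaled $f$, nonnegative where needed, or else one absorbs a harmless $N_\Phi(Q)\,o(1)$. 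Comparing the two bounds yields $N_{\Phi,s}(Q) \ge (2 - \L(f) + O(\delta) + o(1))N_\Phi(Q)$ after collecting the factor of $2$ that comes from the symmetrization $\gamma_\chi \leftrightarrow \gamma'_\chi$ exactly as in Lemma~\ref{thmsmallgapszerosofxi}.

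The main obstacle, I expect, is not any single inequality but rather correctly transcribing the $L$-function explicit formula from \cite{CLLR} with all constants and scalings in place — in particular pinning down why the relevant half-support is $r(f)/2$ (equivalently, why the averaged density $\mathcal{F}_\Phi$ has its ``trivial'' behavior on $[-1/2,1/2]$ rather than $[-1,1]$), and verifying that the remark preceding the lemma about $\Phi$ and $\M\Phi(ix)$ being nonnegative is exactly what is needed to make the diagonal lower bound $\sum_{\gamma_\chi}|\M\Phi(i\gamma_\chi)|^2\cdot g(0) \ge \sum_{\text{simple }\gamma_\chi}|\M\Phi(i\gamma_\chi)|^2 + (\text{nonneg.})$ go through, and to guarantee the off-diagonal terms can be handled. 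Once the averaged explicit formula and its asymptotics are quoted correctly, the rest is a routine repetition of the now-familiar argument.
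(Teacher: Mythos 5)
Your overall architecture (explicit formula, pair--correlation asymptotic, diagonal lower bound, combinatorial inequality) is the right one, but the proposal guesses wrong on precisely the points you flag as the ``main obstacle,'' and those guesses would derail the argument. First, the key input from \cite{CLLR} is not a linear piece on $|x|\le 1/2$ plus a separate GRH lower bound analogous to \eqref{Fasymptoticmore}; it is a single asymptotic \emph{equality} \eqref{FPhiasymp}, valid uniformly for $|x|\le 2-\delta$, of the form $F_\Phi(Q^x,W)=(1+o(1))\bigl[1-(1-|x|)_+ + (\text{delta mass at }0)\bigr]+(\text{error})$: linear in $|x|$ up to $|x|=1$ and constant equal to $1$ on $1\le|x|\le 2-\delta$. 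No analogue of the GGOS lower bound is needed. The factor $r(f)/2$ in $\L(f)$ has nothing to do with ``$\log Q$ versus $\log T$ compressing the support''; it arises because one takes $g(x)=f(r(f)x/(2-\delta))$, so that the last sign change of $g$ sits at $2-\delta$ while the kink of the density $1-(1-|x|)_+$ sits at $|x|=1$, which pulls back to $r(f)/2$ under the change of variables.

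Second, the roles of $f$ and $\widehat f$ are the reverse of what your displayed formula suggests. In the CLLR identity it is $\widehat g$ that is evaluated at the zero differences and $g$ that is integrated against $F_\Phi$ (opposite to \eqref{explicit_formula}). Accordingly one needs $\widehat g\ge 0$ everywhere (a rescaled $\widehat f$) together with $\M\Phi(i\gamma_\chi)\ge 0$ to discard the off-diagonal terms and bound the zero sum \emph{below} by $\widehat g(0)\sum_{\gamma_\chi} m_{\rho_\chi}|\M\Phi(i\gamma_\chi)|^2 = \frac{2-\delta}{r(f)}N^*_\Phi(Q)$, and one needs $g\le 0$ for $|x|\ge 2-\delta$ together with $F_\Phi\ge 0$ to truncate the integral. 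This produces an upper bound on $N^*_\Phi(Q)$, namely $N^*_\Phi(Q)\le(\L(f)+O(\delta)+o(1))N_\Phi(Q)$; the factor $2$ in the statement then comes from the separate combinatorial inequality \eqref{ineqNstarwithN}, i.e.\ $N_{\Phi,s}(Q)\ge 2N_\Phi(Q)-N^*_\Phi(Q)$, not from the symmetrization $\gamma_\chi\leftrightarrow\gamma'_\chi$. Your attempt to read $N_{\Phi,s}$ directly off the diagonal conflates these two steps. With the asymptotic \eqref{FPhiasymp} quoted correctly and the scaling $g(x)=f(r(f)x/(2-\delta))$ fixed, the rest is indeed the routine repetition you describe.
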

\begin{proof}
For $Q>1$ and $x\in\r$, we define the pair correlation function $F_\Phi$ by
\begin{align*}
F_\Phi(Q^{x},W)=\dfrac{1}{N_{\Phi}(Q)}\displaystyle\sum_{Q\leq q\leq 2Q}\dfrac{W(q/Q)}{\varphi(q)}\displaystyle\sum_{\substack{\chi \ (\mathrm{mod} \ q)\\ \mathrm{primitive}}}\bigg|\displaystyle\sum_{\gamma_\chi}\mathcal{M}\Phi(i\gamma_\chi)Q^{i\gamma_\chi{x}}\bigg|^2.
\end{align*}
Using the asymptotic large sieve, Chandee, Lee, Liu and Radziwi\l\l \,\,\cite{CLLR} showed the following asymptotic formula under GRH
\begin{align}\label{FPhiasymp}
& F_{\Phi}(Q^{{x}},W)  \\ & = (1+o(1))\bigg[1-(1-|x|)_++\Phi\big(Q^{-|{x}|}\big)^2\log Q \bigg(\dfrac{1}{2\pi}\int_{-\infty}^{\infty}\big|\mathcal{M}\Phi(it)\big|^2\d t\bigg)^{-1}\bigg] + O\Big(\Phi(Q^{-|{x}|})\log^{1/2} Q\Big)\nonumber,
\end{align}
which holds uniformly for $|{x}|\leq 2-\delta$ as $Q\to\infty$, for any fixed and sufficiently small $\delta>0$. Let
$$
N^*_{\Phi}(Q):=\displaystyle\sum_{Q\leq q\leq 2Q}\dfrac{W(q/Q)}{\varphi(q)}\displaystyle\sum_{\substack{\chi \ (\mathrm{mod} \ q) \\ \text{primitive}}}\displaystyle\sum_{\gamma_\chi}m_{\rho_\chi}\big|\mathcal{M}\Phi(i\gamma_\chi)\big|^2,
$$
where $m_{\rho_\chi}$ denote the multiplicity of the nontrivial zero $\rho_\chi=\hh+i\gamma_\chi$ of $L(s,\chi)$. Since
$$
\displaystyle\sum_{\substack{\gamma_\chi\\\text{simple}}}\big|\mathcal{M}\Phi(i\gamma_\chi)\big|^2\geq \displaystyle\sum_{\gamma_\chi}(2-m_{\rho_\chi})\big|\mathcal{M}\Phi(i\gamma_\chi)\big|^2
$$
we obtain
\begin{align} \label{ineqNstarwithN}
N_{\Phi,s}(Q)\geq 2N_{\Phi}(Q)-N^{*}_{\Phi}(Q).
\end{align}
For any $g\in L^1(\r)$ with $\ft g\in L^1(\r)$ we have the following formula (Fourier inversion):
\begin{align*}
\sum_{Q\leq q\leq 2Q} \dfrac{W(q/Q)}{\varphi(q)}\displaystyle\sum_{\substack{\chi \ (\mathrm{mod} \ q) \\ \text{primitive}}}\displaystyle\;\sum_{\gamma_\chi, \gamma'_\chi}\mathcal{M}\Phi(i\gamma_\chi)\mathcal{M}\Phi(i\gamma'_\chi)\,\widehat{g}\bigg(\dfrac{(\gamma_\chi-\gamma'_\chi)\log Q}{2\pi}\bigg)
	 =N_{\Phi}(Q)\int_{-\infty}^{\infty}g(x)F_{\Phi}(Q^x,W)\d x .
\end{align*}
Letting $f\in\A_{LP}$ and $g(x)=f(r(f)x/(2-\delta))$, for any primitive character $\chi  \ (\mathrm{mod} \ q)$ we obtain
\begin{align*}
&\sum_{\gamma_\chi, \gamma'_\chi}\mathcal{M}\Phi(i\gamma_\chi)\mathcal{M}\Phi(i\gamma'_\chi)\,\widehat{g}\bigg(\dfrac{(\gamma_\chi-\gamma'_\chi)\log Q}{2\pi}\bigg) \\ & = \displaystyle\sum_{\gamma_\chi}m_{\rho_\chi}\big|\mathcal{M}\Phi(i\gamma_\chi)\big|^2\,\widehat{g}(0) \nonumber + \displaystyle\sum_{\gamma_\chi\neq \gamma'_\chi}\mathcal{M}\Phi(i\gamma_\chi)\mathcal{M}\Phi(i\gamma'_\chi)\,\widehat{g}\bigg(\dfrac{(\gamma_\chi-\gamma'_\chi)\log Q}{2\pi}\bigg) \nonumber \\
    	& \geq \dfrac{2-\delta}{r(f)}\,\displaystyle\sum_{\gamma_\chi}m_{\rho_\chi}\big|\mathcal{M}\Phi(i\gamma_\chi)\big|^2. 
    	\end{align*}
This implies that
\begin{align*}
& \sum_{Q\leq q\leq 2Q} \dfrac{W(q/Q)}{\varphi(q)}\displaystyle\sum_{\substack{\chi \ (\mathrm{mod} \ q) \\ \text{primitive}}}\displaystyle\;\sum_{\gamma_\chi, \gamma'_\chi}\mathcal{M}\Phi(i\gamma_\chi)\mathcal{M}\Phi(i\gamma'_\chi)g\bigg(\dfrac{(\gamma_\chi-\gamma'_\chi)\log Q}{2\pi}\bigg)  \geq \dfrac{2-\delta}{r(f)}N^*_{\Phi}(Q).
\end{align*}
On the other hand, observing that 
$$
\Phi\big(Q^{-|{x}|}\big)^2\log Q \bigg(\dfrac{1}{2\pi}\int_{-\infty}^{\infty}\big|\mathcal{M}\Phi(it)\big|^2\d t\Big)\to {\bo \delta}(x),
$$
as $Q\to \infty$ (in the distributional sense) and that
$$
\log^{1/2} Q \int_{-(2-\delta)}^{2-\delta} g(x) \Phi(Q^{-|x|})\d x \leq 2\log^{-1/2} Q \int_{Q^{-({2-\delta})}}^1 \Phi(t)\frac{\d t}{t} = O(\log^{-1/2} Q),
$$
we can use the asymptotic estimate \eqref{FPhiasymp} to obtain
\begin{align*}
\int_{-\infty}^{\infty}g({x})F_{\Phi}(Q^{x},W)\d{x}
  & \leq \int_{-(2-\delta)}^{2-\delta}g({x})F_{\Phi}(Q^{x},W)\d{x} \\ & =  g(0)+ \int_{-(2-\delta)}^{2-\delta}g({x})(1-(1-|x|)_+)\d x + O(\log^{-1/2} Q) + o(1)
\\ & =\frac{2 \L(f)}{r(f)} + O(\delta) + o(1).
\end{align*}
We then conclude that 
$$N^*_{\Phi}(Q) \leq N_{\Phi}(Q) \left(\L(f) + O(\delta) + o(1) \right).$$
Using \eqref{ineqNstarwithN} we finish the proof.
\end{proof}

\subsection{Bounding $N_1^*(T)$}
Similarly to the case of the Riemann zeta-function, the functionals that we need to define depend on the asymptotic behavior of the function $F_1(x,T)$ defined by 
\begin{align} \label{5_9_18:57pm}
F_1(x,T)=N_1(T)^{-1}\displaystyle\sum_{0<\gamma_1, \gamma'_1 \leq T}T^{ix(\gamma_1-\gamma'_1)}w(\gamma_1-\gamma'_1),
\end{align}
where $x\in\mathbb{R}$, $T>0$ and the sum is over pairs of ordinates of zeros (with multiplicity) of $\xi'(s)$. To analyze $N_1^{*}(T)$ we define the following functional
\begin{align*}
\Z_{1}(f) =r(f)+\dfrac{2}{r(f)}\int_{0}^{r(f)}x\, f(x)\d x-\dfrac{8}{r(f)^2}\int_{0}^{r(f)}x^2\, f(x)\d x  +\displaystyle\sum_{k=1}^{\infty}\dfrac{2c_k}{r(f)^{2k+1}}\int_{0}^{r(f)}x^{2k+1}\, f(x)\d x,
\end{align*}
where $c_k=2^{2k+1}\frac{(k-1)!}{(2k)!}$. 

\begin{lemma}\label{thmsimplezerosofxiprime}
Let $f\in \A_{LP}$. Assuming RH, for every fixed small $\delta >0$ we have
\[
N_1^*(T)\leq (\Z_1(f) + O(\delta) + o(1))N_1(T).
\]
\end{lemma}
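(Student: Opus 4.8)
The plan is to follow the same template as the proof of Lemma~\ref{thmsimplezerosofxi}, but using the asymptotic estimate for $F_1(x,T)$ in place of \eqref{Fasymptotics}. First I would recall (or cite) the explicit formula analogous to \eqref{explicit_formula}, namely
\[
\sum_{0<\gamma_1,\gamma_1'\leq T} g\bigg((\gamma_1-\gamma_1')\frac{\log T}{2\pi}\bigg) w(\gamma_1-\gamma_1') = N_1(T)\int_{-\infty}^{\infty} \ft g(x)\, F_1(x,T)\,\d x,
\]
valid for suitable $g$, together with the fact (due to the analogue of the Mueller/Heath-Brown observation) that $F_1(x,T)\geq 0$. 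The key analytic input that replaces Goldston--Montgomery's \eqref{Fasymptotics} is an asymptotic formula for $F_1(x,T)$ in the range $|x|\leq 1$ of the shape $F_1(x,T) = \big(T^{-2|x|}\log T + |x| - \text{(lower-order correction)} + o(1)\big)$, where the correction term, when integrated against $\ft g$, produces exactly the extra terms $-\frac{8}{r(f)^2}\int_0^{r(f)} x^2 f(x)\,\d x + \sum_{k\geq 1}\frac{2c_k}{r(f)^{2k+1}}\int_0^{r(f)} x^{2k+1} f(x)\,\d x$ appearing in $\Z_1(f)$. This formula for $F_1$ is the heart of the matter and is presumably established (or cited from the literature, e.g.\ work on the zeros of $\xi'$) in a step preceding this lemma; the power series in $k$ strongly suggests the correction has a closed form like a combination of $|x|$ times a function whose Taylor coefficients are the $c_k = 2^{2k+1}(k-1)!/(2k)!$, and the uniformity up to $|x|\leq 1$ is where the $O(\delta)$ will have to be absorbed (by working on $|x|\leq 1-\delta$ and controlling the remaining sliver).

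Granting that $F_1$-asymptotic, the argument is then routine and parallels Lemma~\ref{thmsimplezerosofxi} verbatim. Given $f\in\A_{LP}$, set $g(x) = \ft f(x/r(f))/r(f)$, so that $\ft g(x) = f(r(f)x)$, which is $\leq 0$ for $|x|\geq 1/r(f)\cdot r(f) = 1$; wait — more precisely $\ft g(x) = f(r(f) x)\le 0$ when $r(f)|x|\ge r(f)$, i.e.\ $|x|\ge 1$. Plug $g$ into the explicit formula, split the integral at $|x|=1$, use the $F_1$-asymptotic on $|x|\leq 1$ (the $T^{-2|x|}\log T$ piece converging distributionally to ${\bo\delta}_0$ and contributing $\ft g(0) = f(0) = 1$, actually contributing $g$ evaluated appropriately — I would mirror the bookkeeping in Lemma~\ref{thmsimplezerosofxi} where it produced $\Z(f)/r(f)$), and discard the $|x|>1$ part using $F_1\geq 0$ and $\ft g(x)\leq 0$ there. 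This yields
\[
\sum_{0<\gamma_1,\gamma_1'\leq T} g\bigg((\gamma_1-\gamma_1')\frac{\log T}{2\pi}\bigg) w(\gamma_1-\gamma_1') \leq N_1(T)\bigg[\frac{\Z_1(f)}{r(f)} + O(\delta) + o(1)\bigg].
\]
For the lower bound, exactly as in \eqref{eq:oneineq}, the left side is at least $g(0)\sum_{0<\gamma_1\leq T} m_{\rho_1} = N_1^*(T)/r(f)$, since $g(0) = \ft f(0)/r(f) = 1/r(f)$ and $w\geq 0$ with $w(0)=1$. Combining and multiplying by $r(f)$ gives $N_1^*(T)\leq (\Z_1(f) + O(\delta) + o(1))N_1(T)$, as claimed.

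The main obstacle, and the only nontrivial part, is justifying the asymptotic formula for $F_1(x,T)$ with the precise lower-order term whose integral transform yields the $x^2$ and $x^{2k+1}$ moments in $\Z_1(f)$ — this requires an explicit-formula computation for the pairs of zeros of $\xi'(s)$ along the lines of Montgomery's original computation for $\zeta$, incorporating the extra factor coming from $\xi'/\xi$ having a different logarithmic derivative than $\zeta'/\zeta$. I would expect this to be done in a lemma just before this one (or imported from Farmer--Gonek--Lee \cite{FGL} or a companion reference), so that in the present proof one simply invokes it; if it must be redone here, the delicate points are the uniformity in $x$ up to the endpoint $|x|=1$ (forcing the restriction $|x|\leq 1-\delta$ and the resulting $O(\delta)$), the convergence of the series $\sum_k c_k r(f)^{-2k-1}\int_0^{r(f)} x^{2k+1} f(x)\,\d x$ (which follows since $c_k$ decays super-exponentially and $f$ is bounded on $[0,r(f)]$), and the distributional limit $T^{-2|x|}\log T\to{\bo\delta}_0$ that supplies the leading term, all of which are handled exactly as in the proof of Lemma~\ref{thmsimplezerosofxi}.
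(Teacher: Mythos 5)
Your proposal matches the paper's proof, which simply cites Farmer--Gonek--Lee \cite[Theorem 1.1]{FGL} for the asymptotic $F_1(x,T)=T^{-2|x|}\log T+|x|-4|x|^2+\sum_{k\ge 1}c_k|x|^{2k+1}+o(1)(1+T^{-2|x|}\log T)$, valid uniformly for $|x|\le 1-\delta$, and then says the argument follows the $\zeta(s)$ case verbatim. You correctly identified the source and shape of this key input, the origin of the $O(\delta)$ from the restricted range $|x|\le 1-\delta$, and the routine steps mirroring Lemma~\ref{thmsimplezerosofxi}, so the approach is essentially identical to the paper's.
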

\begin{proof}
A result similar to \eqref{Fasymptotics} for the function $F_1(x,T)$ defined in \eqref{5_9_18:57pm} is also known (see \cite[Theorem 1.1]{FGL}), which is the following: for any fixed small $\delta>0$ we have
$$
F_1(x,T)=T^{-2|x|}\log T+|x| - 4|x|^2+\displaystyle\sum_{k=1}^{\infty}c_k |x|^{2k+1}+ o(1)(1+T^{-2|x|}\log T),
	$$ 
uniformly for $|x|\leq 1-\delta$ as $T\to\infty$, where $c_k=2^{2k+1}\frac{(k-1)!}{(2k)!}$.  The proof then follows the same strategy as the proof for $\zeta(s)$ and we leave the details to the reader.
\end{proof}
 	
\section{Numerically optimizing the bounds}

Going back to the sphere packing problem, since we obviously have $\Delta(\r^1)=1$, this shows $r(f)\geq 1$ for all $f\in \A_{LP}$. The last sign change equals $1$ for two (suspiciously) well-known functions: the hat function
\begin{equation*}
H(x)=(1-|x|)_+,
\end{equation*}
whose Fourier transform is $\ft H(x)=\frac{\sin^2(\pi x)}{(\pi x)^2}$, and Selberg's function
\begin{equation*}
S(x)=\frac{\sin^2(\pi x)}{(\pi x)^2(1-x^2)},
\end{equation*}
whose Fourier transform is supported in $[-1,1]$ and given by $\ft S(x)=1-|x|+\frac{\sin(2\pi x)}{2\pi}$ for $|x|<1$. In particular, we can use these two functions to evaluate the functionals derived in Section \ref{prelims} to obtain bounds, but this does not result in the best possible bounds. To obtain better bounds we use the class of functions used in the linear programming bounds by Cohn and Elkies \cite{CE} for sphere packing. That is, we consider the subspace $\A_{LP}(d)$ consisting of the functions 
$f \in \A_{LP}$ of the form 
\begin{equation}\label{eq:subspace} 
f(x) = p(x) e^{-\pi x^2},
\end{equation}
where $p$ is an even polynomial of degree $2d$. 

In \cite{CE}, optimization over a closely related class of functions is done by specifying the functions by their real roots and optimizing the root locations. For the sphere packing problem this works very well, where in $\r^{24}$ it leads to a density upper bound that is sharp to within a factor $1 + 10^{-51}$ of the optimal configuration \cite{CM}. We have also tried this approach for the optimization problems in this paper, but this did not work very well because the optimal functions seem to have very few real roots, which produces a strange effect in the numerical computations, where the last forced root tends to diverge when you increase the degree of the polynomial\footnote{It is worth mentioning that, in a related uncertainty problem, Cohn and Gon\c{c}alves \cite{CGo} discovered the same kind of instability in low dimensions.}. Instead we use sum-of-squares characterizations and semidefinite programming, as was done in \cite{dLOV} for the binary sphere packing problem.

\emph{Semidefinite programming} is the optimization of a linear functional over the intersection of a cone of positive semidefinite matrices (real symmetric matrices with nonnegative eigenvalues) and an affine space. A semidefinite program is often given in block form, which can be written as
\begin{align*}
\text{minimize} \; \sum_{i=1}^I \mathrm{tr}(X_iC_i) : \; & \sum_{i=1}^I \mathrm{tr}(X_iA_{i,j}) = b_j \text{ for } j \in [m],\\
& \,X_1,\ldots,X_I \in \mathbb R^{n \times n} \text{ positive semidefinite},
\end{align*}
where $I\in \n$ gives the number of blocks, $\{C_i\} \subseteq \r^{n \times n}$ is the objective, and $\{A_{i,j}\} \subseteq \r^{n \times n}$, $b \in \r^m$ give the linear constraints (for notational simplicity we take all blocks to have the same size).
Semidefinite programming is a broad generalization of linear programming (which we recover by setting $n=1$ in the above formulation), and, as for linear programming, there exist efficient algorithms for solving them. The reason semidefinite programming comes into play here, is that we can model polynomial inequality constraints as sum-of-squares constraints, which in turn can be written as semidefinite constraints; see, e.g., \cite{Bl}. 

\subsection{Proof of Theorems \ref{thmNstar}, \ref{thmsimplezerosofdir}, and \ref{thmn1star}}

To obtain the first part of Theorem~\ref{thmNstar} from Lemma~\ref{thmsimplezerosofxi} we need to minimize the functional $\Z$ over the space $\A_{LP}(d)$. We can see this as a bilevel optimization problem, where we optimize over scalars $R \geq 1$ in the outer problem, and over functions $f \in \A_{LP}(d)$ satisfying $r(f) = R$ in the inner problem. The outer problem is a simple one dimensional optimization problem for which we use Brent's method \cite{Br}.

A polynomial $p$ that is nonnegative on $[R,\infty)$ can be written as $s_1(x) + (x-R) s_2(x)$, where $s_1$ and $s_2$ are sum-of-squares polynomials with $\mathrm{deg}(s_1),\mathrm{deg}(s_2(x))+1 \leq \deg(p)$; see, e.g., \cite{PS}. This shows that functions of the form \eqref{eq:subspace} that are non-positive on $[R,\infty)$ can be written as
\[
f(x) = -\big(s_1(x^2) + (x^2-R^2) s_2(x^2)\big) e^{-\pi x^2}.
\]

Let $v(x)$ be a vector whose entries form a basis of the univariate polynomials of degree at most $d$. The polynomials $s_1$ and $s_2$ are sum-of-squares if and only if they can be written as $s_i(x) = v(x)^{\sf T} X_i v(x)$ for some positive semidefinite matrices $X_i$ of size $d+1$. That is, we can parameterize functions of the form \eqref{eq:subspace} that are non-positive on $[R,\infty)$ by two positive semidefinite  matrices $X_1$ and $X_2$ of size $d+1$. 

The space of functions of the form \eqref{eq:subspace} is invariant under the Fourier transform. Since a polynomial of degree $2d$ that is nonnegative on $[0,\infty)$ can be written as $s_3(x) + x s_4(x)$, where $s_i(x) = v(x)^{\sf T} X_i v(x)$ for $i=3,4$ are sum-of-squares polynomials of degree $2d$, we have that $\widehat f$ is of the form
\[
\ft{f}(x) = \big(s_3(x^2) + x^2 s_4(x^2)\big)  e^{-\pi x^2}.
\]

Let $\mathcal T$ be the operator that maps $x^{2k}$ to the function $\frac{k!}{\pi^k} L_k^{-1/2}(\pi x^2)$,
where $L_k$ is the Laguerre polynomial of degree $k$ with parameter $-1/2$. Then, for $p$ an even polynomial, we have that $(\mathcal Tp)(x) e^{-\pi x^2}$ is the Fourier transform of $p(x) e^{-\pi x^2}$. We can now describe the functions of the form \eqref{eq:subspace} that are non-positive on $[R,\infty)$ and have nonnegative Fourier transform by positive semidefinite matrices $X_1,\ldots,X_4$ of size $d+1$ whose entries satisfy the linear relations coming from the identity $I(X_1,\ldots,X_4) = 0$, where
\begin{equation*}
I(X_1,\ldots,X_4) = \mathcal T\big(-s_1(x^2) - (x^2-R^2) s_2(x^2)\big) - \big(s_3(x^2) + x^2 s_4(x^2)\big).
\end{equation*}
Here $\mathcal T(-s_1(x^2) - (x^2-R^2) s_2(x^2))$ is a polynomial whose coefficients are linear combinations in the entries of $X_1$ and $X_2$, and the same for $s_3(x^2) + x^2 s_4(x^2)$ with $X_3$ and $X_4$. The linear constraints on the entries of $X_1,\ldots,X_4$ are then obtained by expressing $I(X_1,\ldots,X_4)$ in some polynomial basis and setting the coefficients to zero.

The conditions $f(0) = 1$ and $f(R) = 0$ are linear in the entries of $X_1$ and $X_2$, and the condition $\ft f(0)=1$ is a linear condition on the entries of $X_3$ and $X_4$.
Finally, the objective $\Z(f)$ is a linear combination in the entries of $X_1$ and $X_2$, which can be implemented by using the identity
\begin{equation}\label{eq:iidentity}
\int x^m e^{-\pi x^2} \d x = -\frac{1}{2\pi^{m/2+1/2}} \Gamma\Big(\frac{m+1}{2}, \pi x^2\Big),
\end{equation}
where $\Gamma$ is the upper incomplete gamma function. Hence, the problem of minimizing $\Z(f)$ over functions $f \in \A_{LP}(d)$ that satisfy $r(f) = R$ is a semidefinite program.

To obtain the second part of Theorem~\ref{thmNstar} from Lemma~\ref{thmsimplezerosofxi} and to obtain Theorem~\ref{thmsimplezerosofdir} from Lemma~\ref{thmnphi} we use the same approach with a different functional. To obtain Theorem~\ref{thmn1star} from Lemma~\ref{thmsimplezerosofxiprime} we also do the same as above, but now truncate the series in the functional $\Z_{1}$ at $k=15$ and add the easy to compute upper bound $10^{-10}$ on the remainder of the terms.

\subsubsection{Implementation and numerical issues} \label{sec:rig}  In implementing the above as a semidefinite program we have to make two choices for the polynomial basis that we use: the basis defining the vector $v(x)$, and the basis to enforce the identity $I(X_1,\ldots,X_4) = 0$. This choice of bases is important for the numerical conditioning of the resulting semidefinite program. Following \cite{dLOV} we choose the Laguerre basis $\{L_n^{-1/2}(2\pi x^2)\}$, as this seems natural and performs well in practice (it multiplied by $e^{-\pi x^2}$ is the complete set of even eigenfunctions of the Fourier transform). We solve the semidefinite programs using sdpa-gmp \cite{Nakata}, which is a primal-dual interior point solver using high precision floating point arithmetic. For the code to generate the semidefinite programs and to perform the post processing we use Julia \cite{BEKV}, Nemo \cite{FHHJ}, and Arb \cite{J} (where we use Arb for the ball arithmetic used in the verification procedure). For all computations we use $d=40$. In solving the systems we observe that $X_1$ can be set to zero everywhere without affecting the bounds, so that $r(f) = R$ holds exactly for the function $f(x) = (R^2 - x^2) v(x^2)^{\sf T} X_2 v(x^2) e^{-\pi x^2}$ defined by $X_2$. 

The above optimization approach uses floating point arithmetic and a numerical interior point solver. This means the identity $I(0, X_2, X_3, X_4) = 0$ will not be satisfied exactly, and, moreover, because the solver can take infeasible steps the matrices $X_2$, $X_3$, and $X_4$ typically have some eigenvalues that are slightly negative. In practice this leads to incorrect upper bounds if the floating point precision is not high enough in relation to the degree $d$. Here we explain the procedure we use to obtain bounds that are guaranteed to be correct. This is an adaptation of the method from  \cite{Lo} and \cite{dLOV}. 

We first solve the above optimization problem numerically to find $R$ and $f$ for which we have a good objective value $v = \L(f)$. Then we solve the semidefinite program again for the same value of $R$, but now we solve it as a feasibility problem with the additional constraint $\L(f) \leq v + 10^{-6}$. The interior point solver will try to give the analytical center of the semidefinite program, so that typically the matrices are all positive definite; that is, the eigenvalues are all strictly positive.  Then we use interval arithmetic to check rigorously that $X_2$, $X_3$, and $X_4$ are positive definite, and we compute a rigorous lower bound $b$ on the smallest eigenvalues of $X_3$ and $X_4$.

Using interval arithmetic we compute an upper bound $B$ on the largest coefficient of $I(0, X_2, X_3,X_4)$ in the basis given by the $2d+1$ entries on the diagonal and upper diagonal of the matrix $(R^2 - x^2) v(x^2) v(x^2)^{\sf T}$. If $b \geq (1+2d)B$, then it follows that it is possible to modify the corresponding entries in $X_3$ and $X_4$ such that these matrices stay positive definite and such that $I(0,X_2,X_3,X_4) = 0$ holds exactly \cite{Lo}. This proves that the Fourier transform of the function $f(x) = (R^2 - x^2) v(x^2)^{\sf T} X_2 v(x^2)e^{-\pi x^2}$ is nonnegative.

The only remaining problem is that the identities $f(0) = 1$ and $\mathcal T f(0) = 1$ will not hold exactly. We can, however, for instance write the first part of Theorem~\ref{thmNstar} as follows: Suppose $f$ is a continuous function in $L^1(\r)$ with $f(x) \leq 0$ for $|x| \geq R$ and with $\widehat f \geq 0$, then 
$
N^*(T) \leq (\Z(f) + o(1)) N(T),
$
where we use the following modified definition for $\Z(f)$:
\[
\Z(f) = \frac{1}{\widehat f(0)} \left(f(0)r(f) + \frac{2}{r(f)}\int_0^{r(f)}f(x)x\d x\right).
\]
Since the function $f$ defined by $X_1$ has been verified to satisfy all the constraints, the only thing we still need to do is to compute a rigorous upper bound on $\Z(f)$ (or on similar modifications of the functionals $\tilde \Z(f)$, $\Z_1(f)$, or $\L(f)$), for which we use identity \eqref{eq:iidentity} and interval arithmetic.

\begin{remark} In the arXiv version of this paper we attach the files `Z-$40$.txt', `tildeZ-$40$.txt', `L-$40$.txt', and `Z1-$40$.txt' that contain the value of $R$ on the first line and the matrices $X_2, X_3$ and $X_4$ on the next $3$ lines (all in $100$ decimal floating point values). For convenience it also contains the coefficients of $f$ in the monomial basis on the last line (but these are not used in the verification procedure). We include a script to perform the above verification and compute the bounds rigorously, as well as the code for setting up the semidefinite programs, using a custom semidefinite programming specification library.
\end{remark}

\subsection{Proof of Theorem~\ref{thmpaircorreltation}}

To obtain the first part of Theorem~\ref{thmpaircorreltation} from Lemma~\ref{thmsmallgapszerosofxi} we need to minimize the function $\P$ over the space $\A_{LP}$. We can formulate this as a bilevel optimization problem in which we optimize over $R \geq 1$ in the outer problem. In the inner problem we perform a binary search over $\Lambda$ to find the smallest $\Lambda$ for which there exists a function $f \in \A_{LP}(d)$ that satisfies $f(R) = 0$, $f(x) \leq 0$ for $|x| \geq R$, and $p_f(\Lambda) \geq 0$.

To get a bound whose correctness we can verify rigorously we replace the constraints $f(0)=1$, $\ft f(0) = 1$, and $p_f(\Lambda) \geq 0$ by $f(0) = 1-10^{-10}$, $\ft f(0) = 1+10^{-10}$, and $p_f(\Lambda) \geq 10^{-10}$. We then use the above optimization approach to find good values for $R$ and $\Lambda$. We then add $10^{-6}$ to $\Lambda$ and solve the feasibility problem again to get the strictly feasible matrices $X_2,X_3$, and $X_4$.  By performing the same procedure as in \ref{sec:rig} we can verify that the Fourier transform of the function $f$ defined by $X_2$ is nonnegative everywhere, and using interval arithmetic we can check that the inequalities $f(0) \leq 1$, $\ft f(0) \geq 1$, and $p_f(\Lambda) > 0$ all hold. Note that this verification procedure does not actually check that $\Lambda$ is equal to or even close to $\P(f)$, but the proof of Lemma~\ref{thmsmallgapszerosofxi} also works if we replace $\P(f)$  by any $\Lambda$ for which $p_f(\Lambda)$ is strictly positive. To obtain the second part of the theorem, we do the same except that we replace $p_f$ by $\wt p_f$.

\begin{remark}
In the arXiv version of this paper we attach the files `P-$40$.txt', `tildeP-$40$.txt', that have the same layout as the files mentioned in \ref{sec:rig}, with an additional line containing the value of $\Lambda$. We again include the code to perform the verification and to produce the files.
\end{remark}

\section*{Acknowledgments}
We are very thankful to Emanuel Carneiro and Micah Milinovich for the helpful discussions, suggestions and references. We also thank the anonymous referee for valuable suggestions.

\end{document}